\documentclass[reqno,11pt]{amsart}

\usepackage{graphicx} 
\usepackage{amsmath,amssymb,amsthm,tikz}
\usepackage{xcolor}
\usepackage{esint}
\usepackage{booktabs}
\usepackage{scalerel}
\usepackage[usestackEOL]{stackengine}
\usepackage{mathrsfs}
\usepackage[normalem]{ulem}

\def\dashint{\,\ThisStyle{\ensurestackMath{
            \stackinset{c}{.2\LMpt}{c}{.5\LMpt}{\SavedStyle-}{\SavedStyle\phantom{\int}}}
        \setbox0=\hbox{$\SavedStyle\int\,$}\kern-\wd0}\int}

\def \N {\mathbb{N}} 
\def \R {\mathbb{R}}
\def \dist {\mathrm{dist}}

\def \suchthat {~\big{|} ~}

\newtheorem{theorem}{Theorem} 
\newtheorem{lemma}{Lemma}
\newtheorem{proposition}{Proposition}

\newtheorem{remark}{Remark}

\numberwithin{equation}{section}

\title[Oscillatory free boundary problems in stochastic materials]{Oscillatory free boundary problems in stochastic materials}

\author[D.J. Ara\'ujo]{Dami\~ao J. Ara\'ujo}
\address{Department of Mathematics, Universidade Federal da Para\'iba, 58059-900, Jo\~ao Pessoa-PB, Brazil}{}
\email{araujo@mat.ufpb.br}

\author[G.S. Sá]{Ginaldo S. Sá}
\address{Department of Mathematics, University of Central Florida, 32816, Orlan\-do-FL, USA}{}
\email{ginaldo.sa@knights.ucf.edu}

\author[E. V. Teixeira]{Eduardo V. Teixeira}
\address{Department of Mathematics, University of Central Florida, 32816, Orlan\-do-FL, USA}{}
\email{eduardo.teixeira@ucf.edu}

\author[J.M.~Urbano]{Jos\'{e} Miguel Urbano}
\address{Applied Mathematics and Computational Sciences Program (AMCS), Compu\-ter, Electrical and Mathematical Sciences and Engineering Division (CEMSE), King Abdullah University of Science and Technology (KAUST), Thuwal, 23955 -6900, Kingdom of Saudi Arabia and CMUC, Department of Mathematics, University of Coimbra, 3000-143 Coimbra, Portugal}{} 
\email{miguel.urbano@kaust.edu.sa}

\begin{document}

\subjclass[2020]{Primary 35B65. Secondary 35R35, 35A21}

\keywords{Sharp regularity estimates, free boundary problems, oscillatory singularities}

\begin{abstract}
We investigate a class of free boundary problems with oscillatory singularities within stochastic materials. Our main result yields sharp regularity estimates along the free boundary, provided the power of the singularity varies in a Dini-continuous fashion below a certain threshold. We also reveal an interesting repelling estimate preventing the free boundary from touching the region where the singularity power oscillates above the threshold.

\end{abstract}  

\date{\today}

\maketitle

\section{Introduction}

In this work, we investigate non-negative local minimizers of energy functionals with varying singularities, taking into consideration the complexities stemming from the stochastic nature of the media in question. More precisely, we are interested in regularity estimates for local minimizers of the energy-functional
\begin{equation}\label{funct}
\mathscr{F}(u) := \int_{\Omega}  \langle A(x) \nabla u, \nabla u \rangle + \lambda(x)  u^{\gamma(x)} \chi_{\{u> 0 \}}  \,dx,
\end{equation}
where $\Omega \subset \mathbb{R}^n$, $n\ge 3$, is an open bounded set. The matrix $A(x)$ is only assumed to be symmetric, measurable, and uniformly elliptic, \textit{i.e.}, 
\begin{equation}\label{ellipticity_condition}
    \mu |\xi|^2\leq \langle A(x)\xi , \xi \rangle \leq \mu^{-1}|\xi|^2,
\end{equation}
for some positive constant $0< \mu \le 1$. The forcing term $\lambda(x)$ is non-negative and bounded, \textit{i.e.}, $0\le \lambda (x) \le \Lambda$. The exponent function $\gamma(x)$ is non-negative, measurable, with 
\begin{equation}\label{gamma bounds}
    0\le \gamma(x) \le \gamma^{\star} < 2^{\ast}, 
\end{equation}
where $2^{\ast} = \frac{2n}{n-2}$ stands for the critical Sobolev exponent. 

Three important features of the model considered in this paper should be noted. Firstly, it is a free boundary problem which allows for a continuum range of singularities, \textit{i.e.}, the exponent $\gamma$ varies point-by-point. The static version, when $0< \gamma(x) \equiv \gamma_0<1$, boasts a notable historical foundation, tracing its origins to the pioneering contributions of Phillips \cite{P1, P2} and Alt--Phillips \cite{AP}. These works have inspired substantial advances in the theory of free boundary problems through the years; see, for instance, \cite{AT, DSS, DSS1, DSS2, ES, DKV, ST, Teix, WY, Y}, among others. 

The analysis of free boundary problems shaped via oscillatory singularities offers a robust mathematical framework to model more realistic phenomena, and it is considerably more involved. The inaugural work developed in \cite{ASTU1}, where additional insights on the importance of such models can be found, provides a thorough analysis of the problem wherein diffusion is governed by the Laplace operator, \textit{i.e.}, assuming the matrix coefficient to be the Identity, $A(x) = \text{Id}$. This sets the stage for the second and primary innovation addressed in the present paper: an extension of the modelling framework to include stochastic materials. These materials, characterized by randomness or variability in their properties, introduce an element of unpredictability. Hence, the mathematical setup leads to the analysis of such problems with no predetermined structure, and consequently, we consider the coefficient matrix $A(x)$ to be merely measurable and elliptic. Notably, in the realm of stochastic materials, the formulation of variational free boundary problems proves to be exceptionally challenging, and the current state of our understanding regarding solutions and their corresponding free boundaries in this context remains somewhat limited.  

The third new feature of the model treated in this paper concerns the fact that we broaden the range in which the singularity varies, allowing $\gamma(x)$ to oscillate between $0$ and  $2^{\ast}$. The analysis in \cite{ASTU1} is limited to singularities oscillating between $0$ and $1$. While this may initially seem a minor extension, it allowed us to discover an interesting new feature of free boundary problems shaped via oscillatory singularities: a sort of free boundary repelling estimate, as described in the last part of our main theorem. We further comment that the upper bound for the oscillation range is required for the boundedness of local minimizers. This is due to classical considerations regarding the conformal invariance of the problem, as in the classical Yamabe's problem. All the results are local, so we consider, with no loss, the simplest case, $\Omega=B_1$.

The main result of this paper can be stated as follows.

\begin{theorem}\label{thm-main}
Let $u$ be a non-negative local minimizer of the energy functional \eqref{funct} in $B_1\subset \mathbb{R}^n$. Then 

\begin{enumerate}

\item $u \in C^{0,\epsilon}(B_{1/2})$, for some universal $0<\epsilon = \epsilon(n,\mu, \gamma^{\star})< 1$, and
$$
    \|u\|_{C^{0,\epsilon}(B_{1/2})} \le C \|u\|_{L^2(B_1)},
$$
for a constant $C$, depending only on $n, \mu, \gamma^{\star}, \|\lambda\|_\infty$. 

\medskip

\item If $\gamma(x)$ is Dini continuous at $x_0 \in B_{1/2} \cap \partial \{u > 0 \}$ and $\gamma(x_0) < 2$, then
\begin{equation}\label{sharp growth}
    \sup\limits_{B_r(x_0)} u \le C_0 \, r^{\frac{2}{2-\gamma(x_0)}},
\end{equation}
for a constant $C_0$, depending only on $n, \mu, \gamma^{\star}, \|\lambda\|_\infty$, and the Dini-modulus of continuity of $\gamma$ at $x_0$. 

\medskip

\item Finally, if $\gamma(x)$ is continuous, with modulus of continuity $\omega$,  and $x_0 \in B_{1/2}$ is such that $\gamma(x_0) > 2$, then there exists a constant $\varrho_0>0$, depending only on $n$, the medium $A(x)$, $\|\lambda\|_\infty$, $\|u\|_\infty$, $\gamma^{\star}$ and $\omega$, such that
\begin{equation}\label{eq thm repel}
    \dist \left( x_0, \partial \{u > 0\} \right) \ge \varrho_0.
\end{equation}
In particular, if $x_0 \in Z(u) := \{u=0\} \cap B_{1/2}$, then $x_0$ must be an interior point of the coincidence set $Z(u)$.
\end{enumerate}

\end{theorem}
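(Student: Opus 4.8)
The plan is to argue by a contradiction–compactness scheme combined with a barrier/comparison argument. Suppose the conclusion fails: then there is a sequence of local minimizers $u_k$ (with uniformly bounded $L^\infty$ norm), media $A_k(x)$ satisfying \eqref{ellipticity_condition}, exponents $\gamma_k(x)$ with common modulus of continuity $\omega$ and common bound \eqref{gamma bounds}, and points $x_k \in B_{1/2}$ with $\gamma_k(x_k) > 2$, yet $\dist(x_k, \partial\{u_k>0\}) \to 0$. After translating so that $x_k = 0$ and using part (1) of the theorem, the $u_k$ are uniformly Hölder continuous on $B_{1/2}$, so along a subsequence $u_k \to u_\infty$ locally uniformly, and $A_k \rightharpoonup A_\infty$ in an appropriate (say weak-$*$, or $H$-convergence) sense, with $\gamma_k \to \gamma_\infty$ uniformly on compact sets. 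By continuity of $\gamma_k$ together with the uniform modulus $\omega$, one gets $\gamma_\infty(0) \ge 2$, and in fact $\gamma_k > 2 - \delta$ on a fixed small ball $B_\rho$ for all large $k$, with $\delta$ chosen so that $2-\delta$ is still above the threshold where the singular term becomes "superquadratic."

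The key point is a \emph{non-degeneracy failure}, or rather a one-sided barrier: since $\dist(0,\partial\{u_k>0\})\to 0$, there are free boundary points $z_k \to 0$, and near $z_k$ the minimizer must both vanish on a nontrivial set and satisfy the Euler–Lagrange inequality $\mathrm{div}(A_k\nabla u_k) \le \lambda(x)\gamma_k(x) u_k^{\gamma_k(x)-1}\chi_{\{u_k>0\}}$ in the viscosity/distributional sense. When $\gamma_k(x) > 2$, the right-hand side is $o(u_k)$ as $u_k \to 0$ — the forcing is \emph{subcritical} relative to the linear term — so $u_k$ is essentially a nonnegative subsolution of a uniformly elliptic equation with a zero-order term of the "good" sign. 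The strategy is then: either (i) show via the strong maximum principle / Harnack inequality for the limit operator $\mathrm{div}(A_\infty\nabla\,\cdot\,)$ that $u_\infty \equiv 0$ near $0$ is incompatible with $u_\infty$ being a limit of minimizers whose energy is being minimized (because filling in a small ball strictly decreases the singular energy while the Dirichlet energy is controlled, contradicting minimality — this is where the superquadratic nature $\gamma > 2$, i.e. $u^{\gamma}\le u^2\cdot u^{\gamma-2}$ with the extra smallness, makes the competitor construction win); or (ii) build an explicit subsolution comparison barrier of the form $c(|x|^2 - \rho^2)_-$ or an eigenfunction-type barrier that forces $u_k$ to be positive in a definite neighborhood of $0$, contradicting $\dist(0,\partial\{u_k>0\})\to 0$.

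I expect the main obstacle to be handling the low regularity of $A(x)$: since $A$ is merely measurable and elliptic, one cannot use pointwise or classical comparison principles directly, and the passage to the limit requires $H$-convergence (or $G$-convergence) of the operators $\mathrm{div}(A_k\nabla\,\cdot\,)$, under which minimizers converge to a minimizer of the $\Gamma$-limit functional. One must check that the singular potential term passes to the limit compatibly (using the uniform Hölder bound from part (1) for strong $L^p$ convergence of $u_k$, hence of $u_k^{\gamma_k}\chi_{\{u_k>0\}}$ off the limit free boundary, plus a measure-theoretic argument for the characteristic functions). The competitor/barrier argument then has to be phrased variationally rather than via PDE comparison: given a putative free boundary point arbitrarily close to $0$, replace $u_k$ on a small ball by the $A_k$-harmonic function with the same boundary data (which is $\ge u_k$ by minimality considerations and strictly positive inside by the Harnack inequality valid for merely measurable coefficients), and estimate that the energy drop in the potential term, of order $\rho^n \cdot (\sup u_k)^{\gamma_k} \gtrsim \rho^n(\sup u_k)^{\gamma^\star}$ against a gain, beats the Dirichlet cost — the superquadraticity $\gamma(x_0)>2$ being exactly what guarantees the favorable sign of this trade-off once $\sup_{B_\rho} u_k$ is small, which it is near a free boundary point by part (1). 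Quantifying $\varrho_0$ explicitly then amounts to making each of these estimates non-asymptotic.
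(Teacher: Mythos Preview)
Your intuition that the superquadratic exponent makes the forcing ``subcritical'' is exactly right, but the mechanism you build around it does not close. The competitor you describe --- replacing $u_k$ on a small ball by its $A_k$-harmonic extension $h_k$ --- moves the energy in the wrong direction for a contradiction: since $u_k$ is $A$-subharmonic you get $h_k\ge u_k\ge 0$, so the potential term $\int \lambda h_k^{\gamma_k}$ \emph{increases}, while the Dirichlet term decreases; minimality of $u_k$ only gives $\int\langle A_k\nabla u_k,\nabla u_k\rangle-\int\langle A_k\nabla h_k,\nabla h_k\rangle\le \int\lambda(h_k^{\gamma_k}-u_k^{\gamma_k})$, with both sides nonnegative, which is no contradiction. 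Likewise, ``$u_\infty\equiv 0$ is incompatible with being a limit of minimizers'' is not true in general --- the zero function is a perfectly good local minimizer --- so option (i) as stated does not work. You are also fighting an unnecessary enemy: no $H$-convergence or $\Gamma$-limit is needed, because all the estimates you use (Harnack, De Giorgi--Nash--Moser, harmonic replacement bounds) depend only on the ellipticity constant $\mu$, which is preserved under the rescalings.

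The missing idea is a \emph{blow-up at a putative free boundary point with a normalization that forces the potential coefficient to vanish}. Concretely, the paper assumes $x_0\in\partial\{u>0\}$ with $\gamma\ge 2$ nearby, sets $S_r=\sup_{B_r(x_0)}u\in(0,1]$, and shows that if the dyadic decay $S_{r/2}\le \delta S_r$ ever fails along a subsequence $r_k\to 0$, then the rescalings $v_k(x)=u(x_0+r_k x)/S_{r_k/2}$ satisfy $v_k(0)=0$, $\sup_{B_{1/2}}v_k=1$, $0\le v_k\le \delta^{-1}$, and minimize a functional with coefficient $\lambda_k(x)=S_{r_k/2}^{\,\gamma_k(x)-2}\,r_k^{2}\,\lambda$; since $\gamma_k\ge 2$ and $S_{r_k/2}\le 1$, one gets $\|\lambda_k\|_\infty\to 0$. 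Comparing $v_k$ with its $A_k$-harmonic replacement $h_k$ now gives $\|v_k-h_k\|_{H^1}\to 0$, and Harnack for $h_k$ plus $h_k(0)\to v_\infty(0)=0$ forces $\sup_{B_{2/3}}v_k\to 0$, contradicting $\sup_{B_{1/2}}v_k=1$. Hence $S_{r/2}\le\delta S_r$ for arbitrary $\delta$ eventually, so $u\equiv 0$ on a small ball and $x_0$ is not a free boundary point. The cases $x_0\in\{u=0\}$ and $x_0\in\{u>0\}$ are then handled separately: the former by a compactness argument reducing to the free boundary case, and the latter by the one-line observation that if $z_0$ is the nearest free boundary point then $\gamma(z_0)\le 2<\gamma(x_0)$, so $|x_0-z_0|\ge\omega^{-1}(\gamma(x_0)-2)$.
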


Probably, the primary contribution of Theorem \ref{thm-main} concerns the growth estimate \eqref{sharp growth}, which yields a considerable gain of regularity along the associated free boundaries. As noted in the first part of Theorem \ref{thm-main}, local minimizers are only $C^{0,\epsilon}$ within the non-coincidence set $\{u> 0 \}$. A comparable regularity estimate was previously established in \cite{ASTU1} for the case where $A(x) \equiv \text{Id}$. In that context, however, local minimizers are proven to be of class $C^{1,\alpha}$ everywhere. The sharp regularity estimate is then derived from such local regularity properties of minimizes, which are subsequently transferred to the free boundary points through the continuity properties of $\gamma$. The analysis to attain the corresponding estimate \eqref{sharp growth} in that case was considerably softer than the one carried out in this paper. 

One should interpret estimate \eqref{eq thm repel} as a sort of {\it unique continuation} property tailored explicitly for this class of free boundary models. Heuristically, it asserts that if $u$ possesses an infinite order zero, then $u$ must be identically zero within a quantifiably defined ball. The proof unfolds through an arguably charming blow-up analysis, reminiscent of the approach used in \cite{CKS}, albeit with a distinctive emphasis and nature.

We further comment that the proof delivered in Section \ref{sct repel} has additional implications. Actually, estimate \eqref{eq thm repel} remains valid under the condition that $\gamma(x) \ge 2$ within a specified ball. In essence, our proof establishes that the free boundary stays away from the interior of the set $\gamma^{-1}[2,2^*)$. 

The paper is structured as follows. In Section \ref{sct Prelim}, we outline some elementary, though fundamental, results concerning local minimizers of the energy functional investigated in this study. This section includes a proof of local boundedness and universal H\"older regularity. Moving to Section \ref{sct reg gain}, we establish the regularity gain along the free boundary. Lastly, in Section \ref{sct repel}, we establish the third and final piece of information encoded in Theorem \ref{thm-main}: a quantified repelling estimate that prevents free boundaries from intersecting the set $\gamma^{-1}(2, 2^{*})$. 

\section{Preliminary results} \label{sct Prelim}

In this section, we discuss some elementary results about local minimizers of the energy functional \eqref{funct}. Additionally, we explore essential concepts related to the problem, which, while somewhat well-known, are introduced here for the convenience of the readers. 

Given an energy functional $F(\mathcal{O}, v)$ acting on the Sobolev space $H^1(\Omega)$, a function $w\in H^1(B_1)$ is said to be a local minimizer of $F$ if 
$$
    F(\mathcal{O}, w)\leq F (\mathcal{O}, v),
$$
for all $v$ such that $v-w \in H^1_0(\mathcal{O})$. The regularity theorems we will prove in this paper concern local minimizers of the energy functional \eqref{funct}. This is an abundant class of functions. Indeed, we start by commenting that given a boundary datum $\phi \in H^1(\Omega)$, there always exists a global minimizer (and thus a local minimizer) of the energy functional \eqref{funct} taking $\phi$ as a boundary value in the sense of the traces. The proof follows standard procedures, and we provide a brief sketch below. For the sake of notation convenience, given $\phi \in H^1(\Omega)$, let's denote the affine space 
$$
    \mathcal{K}_\phi := \left \{ v\in H_1(\Omega) \suchthat v-\phi \in H^1_0(\Omega) \right \}.
$$
\begin{proposition}\label{existence_solution}
Given $\phi \in H^1(\Omega)$, there exists $u\in \mathcal{K}_\phi$ satisfying
$$
    \mathscr{F}(u) = \min\limits_{v\in \mathcal{K}_\phi} \mathscr{F}(v).
$$
Furthermore, $u$ is non-negative provided $ \phi \ge 0$. Also if $\lambda \ge 0$ and $\phi \in L^\infty$, then $\|u\|_\infty \le \|\phi\|_\infty$.
\end{proposition}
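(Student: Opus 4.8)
The plan is to prove existence via the direct method in the calculus of variations, so I would proceed in three stages: coercivity, lower semicontinuity, and then the two auxiliary claims about sign and $L^\infty$-bound obtained by truncation.

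\textbf{Coercivity and a minimizing sequence.} First I would observe that $\mathscr{F}$ is bounded below on $\mathcal{K}_\phi$: since $\lambda \ge 0$ and $u^{\gamma(x)}\chi_{\{u>0\}} \ge 0$, we have $\mathscr{F}(v) \ge \mu \int_\Omega |\nabla v|^2\,dx \ge 0$. Hence $m := \inf_{\mathcal{K}_\phi} \mathscr{F}$ is finite (it is also $\le \mathscr{F}(\phi) < \infty$ because $0 \le \lambda v^{\gamma(x)} \le \Lambda(1 + v^{\gamma^\star})$ and $\gamma^\star < 2^\ast$, so the zero-order term is integrable for any $H^1$ function by Sobolev embedding). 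Take a minimizing sequence $v_k \in \mathcal{K}_\phi$ with $\mathscr{F}(v_k) \to m$. From the ellipticity lower bound, $\mu \int_\Omega |\nabla v_k|^2 \le \mathscr{F}(v_k)$ is bounded; writing $v_k = \phi + w_k$ with $w_k \in H^1_0(\Omega)$ and using Poincaré's inequality on $w_k$, the sequence $v_k$ is bounded in $H^1(\Omega)$. Passing to a subsequence, $v_k \rightharpoonup u$ weakly in $H^1(\Omega)$, strongly in $L^2(\Omega)$ (and in $L^q$ for $q < 2^\ast$, by Rellich--Kondrachov), and $v_k \to u$ a.e.\ along a further subsequence; since $H^1_0(\Omega)$ is weakly closed, $u \in \mathcal{K}_\phi$.

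\textbf{Lower semicontinuity.} I would split $\mathscr{F}$ into the Dirichlet part and the zero-order part. For the Dirichlet part $\int_\Omega \langle A(x)\nabla v, \nabla v\rangle\,dx$, the integrand is convex in the gradient variable (as $A(x) \ge \mu\,\mathrm{Id}$) and measurable in $x$, so this functional is weakly lower semicontinuous on $H^1$ by the standard Tonelli--Serrin theorem. For the zero-order part $\int_\Omega \lambda(x) v^{\gamma(x)} \chi_{\{v>0\}}\,dx$, the point is that, along the a.e.-convergent subsequence, $\lambda(x)(v_k)_+^{\gamma(x)}\chi_{\{v_k>0\}} \to \lambda(x) u_+^{\gamma(x)}\chi_{\{u>0\}}$ a.e.\ wherever $u(x) \ne 0$; at points where $u(x)=0$ the limit of the nonnegative integrands is $\ge 0 = \lambda u^{\gamma}\chi_{\{u>0\}}$, so Fatou's lemma gives lower semicontinuity of this term as well. (The only delicate point is the behaviour of $t \mapsto t^{\gamma(x)}\chi_{\{t>0\}}$ near $t=0$: it is lower semicontinuous in $t$ there since the value at $0$ is $0$ and the limit from the right is $0$ when $\gamma(x)>0$ and $1$ when $\gamma(x)=0$; in all cases $\liminf_{t_k \to 0^+} t_k^{\gamma(x)} \ge 0$, which suffices.) Combining, $\mathscr{F}(u) \le \liminf_k \mathscr{F}(v_k) = m$, hence $\mathscr{F}(u) = m$ and $u$ is a minimizer.

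\textbf{Sign and $L^\infty$-bound by truncation.} If $\phi \ge 0$, I would test minimality against $u_+ = \max(u,0)$: since $\phi \ge 0$ forces $u_+ - \phi = (u-\phi)_+ \cdot(\dots)$ — more carefully, $u_+ - \phi \in H^1_0(\Omega)$ because $u_+ - u = u_- \in H^1_0$ (as $u_- $ vanishes on $\partial\Omega$ in the trace sense when $\phi \ge 0$) — we have $u_+ \in \mathcal{K}_\phi$. On $\{u<0\}$ one has $\nabla u_+ = 0$ and the zero-order term vanishes, so $\mathscr{F}(u_+) \le \mathscr{F}(u)$ with strict inequality unless $|\{u<0\}|=0$; minimality then forces $u = u_+ \ge 0$. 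For the sup bound, assume $\lambda \ge 0$, $\phi \in L^\infty$, set $M := \|\phi\|_\infty$ and test against $v := \min(u, M)$: then $v - \phi \in H^1_0(\Omega)$, on $\{u > M\}$ we have $\nabla v = 0$ and $0 \le v^{\gamma(x)}\chi_{\{v>0\}} \le u^{\gamma(x)}\chi_{\{u>0\}}$ (since $0 \le M \le u$ and, for the integrand, $t \mapsto t^{\gamma(x)}$ is nondecreasing on $[0,\infty)$ — using $\lambda \ge 0$), so $\mathscr{F}(v) \le \mathscr{F}(u)$, again with equality only if $|\{u>M\}| = 0$; minimality yields $u \le M$. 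The main obstacle I anticipate is the lower semicontinuity of the zero-order term, precisely because the integrand $t^{\gamma(x)}\chi_{\{t>0\}}$ is discontinuous in $t$ at the origin and $\gamma$ is merely measurable; the Fatou argument above handles it because the discontinuity is in the "favourable" (lower semicontinuous) direction, but this is the step that requires care rather than routine computation.
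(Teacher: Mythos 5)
Your proposal is correct and follows essentially the same route as the paper: the direct method with weak lower semicontinuity of the quadratic gradient term plus Fatou's lemma for the zero-order term (your careful handling of the discontinuity of $t\mapsto t^{\gamma(x)}\chi_{\{t>0\}}$ at $t=0$ is exactly the point that makes Fatou applicable), and the truncations $u^{+}$ and $\min\{u,\|\phi\|_\infty\}$ for the sign and the $L^\infty$ bound. No gaps.
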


\begin{proof}
Set $m:=\inf_{v\in \mathcal{K}_\phi}\mathscr{F}(v)$ and let $(v_k)_{k\ge1} \in \mathcal{K}_\phi$ be a minimizing sequence. By ellipticity and Poincar\'e's inequality, the sequence $(v_k)_{k\ge1}$ is bounded in $H^1$, thus, up to a subsequence, $u_k\rightarrow u$ weakly in $H^1$ and, due to Sobolev embedding, strongly in $L^{\gamma^{\star}}(\Omega)$. Clearly $u \in \mathcal{K}_\phi$. By weak lower semi-continuity and Fatou's lemma, we conclude 
$$
    m \le \mathscr{F}(u) \le \liminf\limits_{k\to \infty} \mathscr{F}(v_k) = m,
$$
and thus $u$ is a minimizer of $\mathscr{F}(u)$ over $\mathcal{K}_\phi$. 

Next, if $\phi \ge 0$, $u^{+}$ competes with $u$ in the minimization problem. We then deduce, by ellipticity,
$$
    \int_{\{u < 0 \}} |\nabla u |^2 dx \le 0.
$$
Finally, if $\phi \in L^\infty$ and $f\ge 0$, the function $u_M :=\min\{u,\|\phi\|_\infty\}$ competes with $u$ and we similarly deduce 
$$
    \int_{\{u > M \}} |\nabla u |^2 dx \le \int_{\{u > M \}} f(x) \left [ M^{\gamma(x)} - u^\gamma(x)\right ] dx \le 0.
$$
\end{proof}

Next, we record the scaling feature of the energy functional $\mathscr{F}$ in a lemma. The proof follows from direct calculation via a change of variables.

\begin{lemma}[Scaling] \label{scaling}
Let $u\in H^1(\Omega)$ be a local minimizer of \eqref{funct}. Given $x_0 \in \Omega$ and  $0<\varrho < \dist(x_0, \partial \Omega)$ and $\mu>0$, the function
$$
    v (x) := \frac{u(x_0 + \varrho x)}{\mu}, \quad x\in B_1
$$
is a local minimizer of the functional
$$
    \mathscr{F}_{x_0, \varrho, \mu}(v) := \int_{B_1} \left( \langle A_{x_0, \varrho}(x)\nabla v, \nabla v \rangle  + \lambda_{x_0, \varrho, \mu}(x) (v\chi_{\{v> 0 \}})^{\gamma_{x_0,\varrho} (x)} \right) dx,
$$
where
$$
    \begin{array}{rll}
         A_{x_0, \varrho}(x) &:=& A(x_0+\varrho x), \\
         \lambda_{x_0, \varrho, \mu}(x) &:=& \mu^{\gamma(x_0+\varrho x)}\left(\frac{\varrho}{\mu} \right)^2  \lambda(x_0 + \varrho x)\\  
          \gamma_{x_0, \varrho}(x) &:=& \gamma(x_0 + \varrho x).
    \end{array}
$$
\end{lemma}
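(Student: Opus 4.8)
The plan is to prove the lemma by a single change of variables, transporting competitors back and forth between the two minimization problems. Fix an open set $\mathcal{O}$ with $\mathcal{O}\subset\subset B_1$ and an arbitrary competitor $w\in H^1(\mathcal{O})$ with $w-v\in H^1_0(\mathcal{O})$; the goal is to show $\mathscr{F}_{x_0,\varrho,\mu}(\mathcal{O},v)\le \mathscr{F}_{x_0,\varrho,\mu}(\mathcal{O},w)$. First I would set up the correspondence between admissible functions: to $w$ associate $\tilde w(y):=\mu\, w\!\left(\frac{y-x_0}{\varrho}\right)$, defined on the scaled domain $\mathcal{O}_{x_0,\varrho}:=x_0+\varrho\,\mathcal{O}$, which satisfies $\mathcal{O}_{x_0,\varrho}\subset\subset\Omega$ by the assumption $0<\varrho<\dist(x_0,\partial\Omega)$. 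Since $v(x)=u(x_0+\varrho x)/\mu$, the function associated with $v$ itself is precisely $u$ restricted to $\mathcal{O}_{x_0,\varrho}$, and the chain rule together with $w-v\in H^1_0(\mathcal{O})$ gives $\tilde w-u\in H^1_0(\mathcal{O}_{x_0,\varrho})$. The map $w\mapsto\tilde w$ has the explicit inverse obtained by reversing the roles of $u$ and $v$, so it is a bijection between the two admissible classes.

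Next I would carry out the computation of $\mathscr{F}(\mathcal{O}_{x_0,\varrho},\tilde w)$ with the substitution $y=x_0+\varrho x$, $dy=\varrho^n\,dx$. Using $\nabla_y\tilde w(y)=(\mu/\varrho)\,\nabla_x w(x)$, the Dirichlet part becomes $\mu^2\varrho^{\,n-2}\int_{\mathcal{O}}\langle A_{x_0,\varrho}(x)\nabla w,\nabla w\rangle\,dx$; using $\tilde w(y)=\mu\, w(x)$ and that $\{\tilde w>0\}$ corresponds to $\{w>0\}$ under the change of variables, the potential part becomes $\varrho^n\int_{\mathcal{O}}\mu^{\gamma(x_0+\varrho x)}\lambda(x_0+\varrho x)\,(w\chi_{\{w>0\}})^{\gamma_{x_0,\varrho}(x)}\,dx$. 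Factoring out $\mu^2\varrho^{\,n-2}$ and recognizing $\varrho^n/(\mu^2\varrho^{\,n-2})=(\varrho/\mu)^2$, the two pieces combine into $\mu^2\varrho^{\,n-2}\,\mathscr{F}_{x_0,\varrho,\mu}(\mathcal{O},w)$, with exactly the coefficients $A_{x_0,\varrho}$, $\lambda_{x_0,\varrho,\mu}$ and $\gamma_{x_0,\varrho}$ displayed in the statement.

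Finally, since $u$ is a local minimizer of $\mathscr{F}$ and $\tilde w$ is an admissible competitor on $\mathcal{O}_{x_0,\varrho}$, we have $\mathscr{F}(\mathcal{O}_{x_0,\varrho},u)\le \mathscr{F}(\mathcal{O}_{x_0,\varrho},\tilde w)$, that is, $\mu^2\varrho^{\,n-2}\,\mathscr{F}_{x_0,\varrho,\mu}(\mathcal{O},v)\le \mu^2\varrho^{\,n-2}\,\mathscr{F}_{x_0,\varrho,\mu}(\mathcal{O},w)$; dividing by the positive constant $\mu^2\varrho^{\,n-2}$ yields the desired inequality, and since $\mathcal{O}$ and $w$ were arbitrary, $v$ is a local minimizer of $\mathscr{F}_{x_0,\varrho,\mu}$. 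There is no genuine obstacle here: the argument is purely a change of variables, and the only points requiring mild care are verifying that $w\mapsto\tilde w$ respects the boundary condition (immediate from its explicit inverse) and keeping track of the powers $n-2$ and $2$ together with the factor $\mu^{\gamma(x_0+\varrho x)}$, whose genuine $x$-dependence through $\gamma$ is the single slightly non-obvious piece of bookkeeping.
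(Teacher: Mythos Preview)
Your proof is correct and is exactly the ``direct calculation via a change of variables'' the paper alludes to without writing out; you have simply supplied the details, including the identity $\mathscr{F}(\mathcal{O}_{x_0,\varrho},\tilde w)=\mu^{2}\varrho^{\,n-2}\,\mathscr{F}_{x_0,\varrho,\mu}(\mathcal{O},w)$ and the bijection between admissible competitors. The bookkeeping with the $x$-dependent factor $\mu^{\gamma(x_0+\varrho x)}$ and the powers of $\varrho$ and $\mu$ is handled correctly.
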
 

\medskip

\subsection{$L^\infty$ bounds} \label{subsct Linfty}

In what follows, we will consider a more general family of energy functionals of the form
\begin{equation}\label{functional_rho}
    \mathscr{F}_{\rho}(u):=\int_\Omega  \langle A(x)\nabla u, \nabla u \rangle + \rho(x,u) \, dx,
\end{equation}
where $\rho \colon \Omega\times \mathbb{R} \rightarrow\mathbb{R}$ satisfies  
\begin{equation}\label{bounded_pho}
    \left| \rho(x,u) \right| \leq f(x) +c|u|^\gamma,
\end{equation}
with $f\in L^q$, for $ q>\frac{n}{2}$ and $2\le \gamma<2^\ast=\frac{2n}{n-2}$.

It is critical to remark that we do not require the differentiability of $\rho$ with respect to its $u$ argument. We also comment that, while condition $2\le \gamma$ is not restrictive, assuming $\gamma$ is strictly less than $2^\ast$ is critical for the local boundedness due to the conformal invariance of the problem. Finally, we note that the functional \eqref{funct} trivially satisfies \eqref{bounded_pho}, for 
$$
    \left | \lambda (x) (u \chi_{\{u>0\}})^{\gamma(x)} \right | \le \|\lambda\|_\infty \max\{1, |u| \}^{\gamma^{\star}} \le \|\lambda\|_\infty (1 + |u|^{\gamma^{\star}}).
$$

We next comment on an $L^\infty-$bound for local minimizers of $\mathscr{F}_{\rho}$. The rationale follows a somewhat standard path along the lines of \cite{GG}; yet, we opt to include a proof as a courtesy to the readers.

\begin{proposition}\label{blocal}
Let $u$ be a local minimizer of \eqref{functional_rho} in $B_1$ under condition \eqref{bounded_pho}. Then, 
$$
\|u\|_{L^\infty(B_{1/2})} \le C \|u\|_{L^2(B_1)},
$$
for a constant $C$ depending only on $n, \mu$, $\|f\|_q$ and $\gamma$.
\end{proposition}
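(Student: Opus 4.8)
The plan is to run a De Giorgi--type truncation argument on the level sets of $u$, exploiting the minimality of $u$ against the competitor obtained by cutting off high levels. Fix concentric balls $B_{1/2} \subset B_r \subset B_s \subset B_1$ and a cutoff $\eta \in C_c^\infty(B_s)$ with $\eta \equiv 1$ on $B_r$. For $k \ge 0$ write $w = (u-k)^+$ and test the minimality of $u$ against $v := u - \eta^2 w$ (so $v = u$ where $u \le k$, and $v$ is pulled down where $u$ is large). Comparing $\mathscr{F}_\rho(u) \le \mathscr{F}_\rho(v)$, expanding the quadratic term $\langle A\nabla v, \nabla v\rangle$, and using ellipticity \eqref{ellipticity_condition} to absorb the cross terms (Young's inequality, with the $|\nabla \eta|^2 w^2$ term moved to the right-hand side), one arrives at the Caccioppoli-type estimate
$$
\int_{B_r} |\nabla w|^2 \, dx \le \frac{C}{(s-r)^2} \int_{A_{k,s}} w^2 \, dx + C \int_{A_{k,s}} \bigl( |\rho(x,u)| + |\rho(x,v)| \bigr) \, dx,
$$
where $A_{k,s} := \{u > k\} \cap B_s$. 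The structural bound \eqref{bounded_pho} controls the $\rho$ terms by $\int_{A_{k,s}} (f + c|u|^\gamma)\,dx$, since $|v| \le |u|$ on that set.

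Next I would convert this into an iteration inequality for the quantities
$$
\varphi(k,r) := \int_{A_{k,r}} (u-k)^2 \, dx \quad\text{and}\quad a(k,r) := |A_{k,r}|.
$$
By Sobolev's inequality applied to $\eta w \in H_0^1(B_1)$ together with the Caccioppoli estimate, and Hölder's inequality to handle the powers, for $h > k$ one gets a recursion of the form
$$
\varphi(h,r) \le \frac{C}{(s-r)^2} \, \frac{\bigl[\varphi(k,s)\bigr]^{1+\delta}}{(h-k)^{2\delta}} \cdot \bigl(\text{lower order terms in } a(k,s)\bigr),
$$
for some $\delta = \delta(n) > 0$; the exponent gain $1 + \delta$ is exactly the Sobolev improvement $2^* > 2$. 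The term $\int_{A_{k,s}} |u|^\gamma\,dx$ is the delicate one: since $\gamma < 2^* = \frac{2n}{n-2}$, one writes $|u|^\gamma \le C(k^\gamma + (u-k)^\gamma)$ on $A_{k,s}$ and estimates $\int (u-k)^\gamma$ by interpolating between $L^2$ and $L^{2^*}$ (Gagliardo--Nirenberg), which is where the strict inequality $\gamma < 2^*$ is used to keep the resulting exponent of $\varphi$ strictly above $1$; the $f$-term is handled by Hölder using $q > n/2$. One then invokes the standard fast-geometric-convergence lemma (De Giorgi / Stampacchia, e.g. as in \cite{GG}): if $\Psi(k) := \varphi(k, r_k) + a(k, r_k)^{\text{something}}$ (with $r_k \downarrow 1/2$) satisfies $\Psi(k_{j+1}) \le C\, b^j \, \Psi(k_j)^{1+\delta}$, then $\Psi$ vanishes for $k$ beyond a threshold $k_\infty$ controlled by $\Psi(k_0)$, which gives $u \le k_\infty$ a.e. in $B_{1/2}$ with $k_\infty \le C\|u\|_{L^2(B_1)}$ after checking the homogeneity (if $\|u\|_{L^2(B_1)} \le 1$ one can normalize; the general case follows by the scaling in Lemma \ref{scaling} or by a direct dilation of $u$). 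The same argument applied to $-u$ bounds $u$ from below, yielding $\|u\|_{L^\infty(B_{1/2})} \le C\|u\|_{L^2(B_1)}$.

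The main obstacle is the borderline growth term $|u|^\gamma$ with $\gamma$ allowed up to (but below) the critical Sobolev exponent: one must be careful that, after applying Sobolev on $\eta w$ and Hölder on $A_{k,s}$, the power of $\varphi(k,s)$ appearing on the right is genuinely $> 1$ and the power of $(h-k)$ in the denominator is positive, uniformly as $\gamma \uparrow 2^*$ is approached only up to the fixed bound $\gamma$. This forces the constant $C$ to depend on $\gamma$ (degenerating as $\gamma \to 2^*$), consistent with the statement, and is precisely the reason the hypothesis $\gamma < 2^*$ cannot be dropped — it is the conformal-invariance threshold alluded to in the text. Everything else (the Caccioppoli step, the cutoff bookkeeping, the final iteration lemma) is routine.
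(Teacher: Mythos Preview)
Your overall strategy (De Giorgi truncation against the competitor $u-\eta w_k$, Sobolev improvement, fast geometric iteration) is the same as the paper's. The gap is in the step you flag as ``delicate'' but do not actually carry out: controlling $\int_{A_{k,s}} (u-k)^\gamma$ when $\gamma$ ranges over the \emph{full} subcritical interval $[2,2^\ast)$.

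Your proposed fix, interpolating $\|w\|_{L^\gamma}$ between $\|w\|_{L^2}$ and $\|w\|_{L^{2^\ast}}$, does not close the estimate for $\gamma$ near $2^\ast$. Writing $\|w\|_{L^\gamma}^\gamma \le \|w\|_{L^2}^{\gamma(1-t)}\|w\|_{L^{2^\ast}}^{\gamma t}$ with $\tfrac{1}{\gamma}=\tfrac{1-t}{2}+\tfrac{t}{2^\ast}$, one computes $\gamma t=\tfrac{2^\ast(\gamma-2)}{2^\ast-2}$, which exceeds $2$ as soon as $\gamma>2+\tfrac{4}{n}$. In that range the right-hand side of your Caccioppoli--Sobolev inequality carries $\|w\|_{L^{2^\ast}}^{\gamma t}$ with $\gamma t>2$, a superlinear power of the quantity you are trying to bound; Young's inequality cannot absorb it, and the recursion for $\varphi$ does not acquire the exponent $1+\delta$ you claim. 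So the assertion that ``the strict inequality $\gamma<2^\ast$ is used to keep the resulting exponent of $\varphi$ strictly above $1$'' is not what is actually going on.

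What the paper does instead is an absorption by \emph{smallness of the radius}. One writes
\[
\int (\eta w_k)^\gamma \le \Bigl(\int (\eta w_k)^{2^\ast}\Bigr)^{2/2^\ast}\,\zeta(R),\qquad
\zeta(R):=\Bigl(\int_{B_R}(\eta w_k)^{\frac{(\gamma-2)n}{2}}\Bigr)^{2/n},
\]
observes that $\zeta(R)\le \|u\|_{L^{2^\ast}}^{\gamma-2}|B_R|^{\,1-\gamma/2^\ast}$ is a genuine modulus of continuity in $R$ (here $\gamma<2^\ast$ is used), and then chooses $R$ so small that the Sobolev constant times $\zeta(R)$ is at most $\tfrac12$. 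This lets the entire $\gamma$-term be absorbed back into $\int|\nabla u|^2$ on the left, after which a hole-filling step (\cite[Lemma~1.1]{GG}) removes the residual $\int_{\Omega^k(R)\setminus\Omega^k(r)}|\nabla u|^2$ and yields the clean Caccioppoli inequality
\[
\int_{\Omega^k(r)}|\nabla u|^2\,dx\le \frac{C}{(R-r)^2}\int_{\Omega^k(R)}(u-k)^2\,dx + k^\gamma|\Omega^k(R)|.
\]
From here the standard De Giorgi iteration runs exactly as you describe. In short: your outline is right, but the mechanism that makes the subcritical growth harmless is absorption via small $\zeta(R)$ (plus hole-filling), not interpolation.
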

\begin{proof}
The proof is based on De Giorgi's method. For $0<t<1$ and $k \in \mathbb{R}$, denote
$$
    \Omega^k(t) := \{x\in B_t \suchthat u-k> 0 \}.
$$
For $0<r< R \ll 1$, to be set \textit{a posteriori}, let $\eta$ be a radially symmetric cut-off function $\eta \in C_0^{\infty}(B_R)$, with $0\le \eta \le 1$, $\eta \equiv 1$ in $B_r$, and $|D\eta| \le 2(R-r)^{-1}$. Denote $w_k = (u-k)^{+}$ and consider the competing function $v:= u - \eta w_k$. Since $u$ is a local minimizer of $\mathscr{F}_{\rho}$, we have
$$
    \mathscr{F}_{\rho} \left( \Omega^k(R), u \right)  \le \mathscr{F}_{\rho} \left( \Omega^k(R), v \right),
$$
which leads to
\begin{equation}\label{Linfty Eq1}
    \int_{\Omega^k(R)} \left|\nabla u\right|^2 dx \le C_1 \left( \int_{\Omega^k(R)} \left|\nabla v\right|^2 dx +  \int_{\Omega^k(R)} f(x) + \left|u\right|^\gamma + \left|v\right|^\gamma dx \right).
\end{equation}
Note that, in $\Omega^k(R)$, we have
$$
    u = (1-\eta)u + \eta(w_k + k) \quad \text{ and } \quad v = (1-\eta)u + k \eta.
$$
Hence
$$
   |u|^\gamma + |v|^\gamma \le C_2 \left( (\eta w_k)^\gamma + |u|^\gamma(1-\eta)^\gamma  + \eta^\gamma k^\gamma \right)
$$
and
\begin{equation}\label{Linfty Eq5}
   |\nabla v|^2  \le C_2 \left ( (1-\eta)^2 |\nabla u|^2 + \frac{1}{(R-r)^2} (u-k)^2 \right ).
\end{equation}
By H\"older's inequality, we estimate
\begin{eqnarray*} 
    \int_{\Omega^k(R)} (\eta w_k)^\gamma dx &=& \displaystyle  \int_{\Omega^k(R)} (\eta w_k)^{2}  (\eta w_k)^{\gamma - 2} dx \\
    &\le& \displaystyle  \left ( \int_{\Omega^k(R)} (\eta w_k)^{2^\ast} dx \right)^{\frac{2}{2^\ast}} \left ( \int_{B_R} (\eta w_k)^{\frac{(\gamma - 2)n}{2}} dx \right)^{\frac{2}{n}}.
\end{eqnarray*}
Note that the quantity
$$
    \zeta(s) := \left ( \int_{B_s} (\eta w_k)^{\frac{(\gamma - 2)n}{2}} dx \right)^{2/n}
$$
is a modulus of continuity, uniform in $k$. Indeed, by H\"older's inequality, we can estimate
$$
    \zeta(s) \le \left ( \int_{B_s} u^{\frac{(\gamma - 2)n}{2}} dx \right)^{2/n} \le \|u\|^{\gamma - 2}_{L^{2^{\ast}}} |B_s|^{1-\frac{\gamma}{2^{\ast}}}.
$$
Note that we have $\gamma < 2^{\ast}$. Returning to \eqref{Linfty Eq1}, with the aid of \eqref{Linfty Eq5}, and applying Sobolev inequality, we can further estimate
\begin{eqnarray*}
    \int_{\Omega^k(R)} |\nabla u|^2 dx &\le& \displaystyle S\zeta(R) \left ( \int_{\Omega^k(R)} |\nabla \eta w_k|^2 dx \right ) \\
    &\le & \displaystyle S C_2 \zeta(R) \left ( \int_{\Omega^k(R)} |\nabla u|^2 dx + \frac{1}{(R-r)^2} \int_{\Omega^k(R)}  w_k^2 dx \right ),
\end{eqnarray*}
where $S$ is the Sobolev constant. We can add to both sides in \eqref{Linfty Eq1} the term
$$ 
    \int_{\Omega^k(R)} \left| u \right|^\gamma dx.
$$ 
Now, choose $0< R \ll 1$ small enough so that $SC_2\zeta(R) = 1/2$. Combining all estimates above, keeping in mind $\eta \equiv 1$ in $B_r$, yields 
\begin{eqnarray*}
    \int_{\Omega^k(R)} |\nabla u|^2 + |u|^\gamma dx &\le& C \left( \int_{\Omega^k(R) \setminus\Omega^k(r) } |\nabla u|^2 + |u|^\gamma dx \right. \\
    & & \left. + \frac{1}{(R-r)^2} \int_{\Omega^k(R)} w_k^2 dx + k^\gamma |\Omega^k(R)| \right).    
\end{eqnarray*}
Applying a standard analysis lemma, similar to \cite[Lemma 1.1]{GG}, we reach
$$
    \int_{\Omega^k(r)} |\nabla u|^2 + |u|^\gamma dx\le C \left (\frac{1}{(R-r)^2} \int_{\Omega^k(R)} w_k^2 dx + k^\gamma |\Omega^k(R)|\right ),   
$$
which readily yields
\begin{equation}\label{Linfty Eq10}
        \displaystyle \int_{\Omega^k(r)} |\nabla u|^2  dx\le \frac{C}{(R-r)^2} \int_{\Omega^k(R)} (u-k)^2 dx + k^\gamma |\Omega^k(R)|.   
\end{equation}
Repeating the argument to $-u$, which minimizes a similar functional, and then applying a standard De Giorgi iterative argument, we reach the local boundedness of $u$, as stated.
\end{proof}

\subsection{H\"older continuity} \label{subsct Holder}
Owing to inequality \eqref{Linfty Eq10}, combined with the fact that if $u$ is a local minimizer of 
$\mathscr{F}_{\rho(x,u)}$ then $-u$ is a local minimizer of $\mathscr{F}_{\rho(x-u)}$, one can eventually proceed with the classical De Giorgi's proof, as delineated in \cite{DG} and in \cite{GG}, to attain interior H\"older continuity of local minimizers. We shall adopt a different strategy inspired by the recent papers \cite{LST, PTeix, STeix}. In addition to its arguably more appealing geometric flavour, this approach provides the advantage of introducing fundamental tools that will be utilized in the subsequent sections.

Our first result concerns a local minimizer's proximity (properly measured) to its $A$-harmonic replacement, \textit{i.e.}, with the unique function $h$ in $B_r$ satisfying $h=u$ on $\partial B_r$ and
$$
    \int_{B_r} \langle A(x)\nabla h , \nabla w \rangle \, dx= 0,
$$
for all $w\in H^1_0(B_r)$.

\begin{lemma}\label{Approx_lemma}
Let $u$ be a local minimizer of \eqref{functional_rho} such that 
\begin{equation} \label{hypo}
\dashint_{B_1} u(x)^2 dx\leq 1.
\end{equation}
There exists a constant $C=C(n, \mu, q)$ such that, for every $r \in \left( 0, \frac12 \right)$, we have
\begin{equation}
\dashint_{B_{r}} \left| u(x)-h(x) \right|^2 dx \leq C r^{2-n/q},
\end{equation}
where $h$ is the $A$-harmonic replacement of $u$ in $B_r$.
\end{lemma}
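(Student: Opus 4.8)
The plan is to run the classical comparison argument: exploit the minimality of $u$ against its $A$-harmonic replacement to control the Dirichlet energy of the error, and then absorb the contribution of the lower-order term $\rho$ by means of the a priori $L^\infty$ estimate from Proposition \ref{blocal}.

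Fix $r\in\left(0,\frac12\right)$ and let $h$ be the $A$-harmonic replacement of $u$ in $B_r$. Extending $h$ by $u$ outside $B_r$ yields a competitor $v\in H^1(B_1)$ with $v-u\in H^1_0(B_r)$, so the local minimality of $u$ for $\mathscr{F}_\rho$ gives $\mathscr{F}_\rho(B_r,u)\le\mathscr{F}_\rho(B_r,h)$, i.e.
\[
\int_{B_r}\bigl(\langle A\nabla u,\nabla u\rangle-\langle A\nabla h,\nabla h\rangle\bigr)\,dx\le\int_{B_r}\bigl(\rho(x,h)-\rho(x,u)\bigr)\,dx.
\]
Testing the weak formulation that defines $h$ with $u-h\in H^1_0(B_r)$ shows $\int_{B_r}\langle A\nabla h,\nabla(u-h)\rangle\,dx=0$, whence, by symmetry of $A$, the left-hand side equals $\int_{B_r}\langle A\nabla(u-h),\nabla(u-h)\rangle\,dx$. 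Ellipticity \eqref{ellipticity_condition} together with \eqref{bounded_pho} then yield
\[
\mu\int_{B_r}|\nabla(u-h)|^2\,dx\le\int_{B_r}\bigl(|\rho(x,u)|+|\rho(x,h)|\bigr)\,dx\le\int_{B_r}\bigl(2f(x)+c|u|^\gamma+c|h|^\gamma\bigr)\,dx.
\]

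Next I estimate the right-hand side. By H\"older's inequality, $\int_{B_r}f\le\|f\|_{L^q(B_1)}|B_r|^{1-1/q}\le Cr^{\,n-n/q}$. For the remaining terms, the normalization \eqref{hypo} and Proposition \ref{blocal} give $\|u\|_{L^\infty(B_{1/2})}\le C_0$; since $\overline{B_r}\subset B_{1/2}$, the weak maximum principle for $A$-harmonic functions gives $\|h\|_{L^\infty(B_r)}\le\|u\|_{L^\infty(B_{1/2})}\le C_0$, so $\int_{B_r}(|u|^\gamma+|h|^\gamma)\le Cr^n\le Cr^{\,n-n/q}$ for $r<1$. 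Hence $\int_{B_r}|\nabla(u-h)|^2\,dx\le Cr^{\,n-n/q}$. Applying Poincar\'e's inequality to $u-h\in H^1_0(B_r)$, one gets $\int_{B_r}|u-h|^2\,dx\le C(n)\,r^2\int_{B_r}|\nabla(u-h)|^2\,dx\le Cr^{\,2+n-n/q}$, and dividing by $|B_r|\simeq r^n$ produces the asserted estimate $\dashint_{B_r}|u-h|^2\,dx\le Cr^{\,2-n/q}$, with $C$ depending on $n,\mu,q$ (and, through \eqref{bounded_pho}, on $\|f\|_q$, $\gamma$ and $c$).

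The argument is essentially routine; the one place that needs care is the control of the lower-order term, specifically the bound on $\|h\|_{L^\infty(B_r)}$ uniform in $r$, which I obtain by feeding the $L^\infty$ estimate of Proposition \ref{blocal} into the maximum principle — alternatively one could bound $\int_{B_r}|u|^\gamma$ via Sobolev embedding and a Caccioppoli inequality, since $\gamma<2^\ast$, but invoking Proposition \ref{blocal} is cleaner. The hypothesis $q>\frac n2$ is used only to make the final exponent $2-n/q$ positive, so that the estimate genuinely encodes decay of $u$ towards its $A$-harmonic replacement.
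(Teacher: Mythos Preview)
Your proof is correct and follows essentially the same approach as the paper: compare $u$ against its $A$-harmonic replacement via minimality, rewrite the energy difference as $\int_{B_r}\langle A\nabla(u-h),\nabla(u-h)\rangle\,dx$ using symmetry and the equation for $h$, bound the $\rho$ terms via Proposition~\ref{blocal} plus the maximum principle and H\"older's inequality, and finish with Poincar\'e. The paper's argument is identical in structure and in the use of the key ingredients.
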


\begin{proof}
Let $0<r< 1/2$ and $h$ be the $A$-harmonic replacement of $u$ in $B_r$.
Since $h$ competes with $u$ in the minimization problem, we readily have 
$$
    \int_{B_r}\left(  \langle A(x)\nabla u , \nabla u \rangle -  \langle A(x)\nabla h , \nabla h \rangle \right) dx \leq \int_{B_r} \left( \rho(x,h)-\rho(x,u)\right) dx.
$$
Now, since $h$ is the $A$-harmonic replacement of $u$, we have
$$
    \int_{B_r} \langle A(x) \nabla h , \nabla h \rangle \, dx = \int_{B_r} \langle A(x) \nabla h , \nabla u \rangle \, dx,
$$
and, due to the symmetry and the ellipticity of $A$, this implies that
$$
    \int_{B_r} \left( \langle A(x) \nabla u , \nabla u \rangle - \langle A(x) \nabla h , \nabla h \rangle \right) dx
$$
\begin{eqnarray*}
    & = &\int_{B_r} \langle A(x) \nabla(u-h) , \nabla (u-h) \rangle \, dx  \\
    & \geq & \mu \int_{B_r} \left| \nabla(u-h) \right|^2  dx.
\end{eqnarray*}

Because of \eqref{hypo} and Proposition \ref{blocal}, $u$ is bounded in $B_{r}$, and, by the maximum principle, so is $h$. Hence, using \eqref{bounded_pho} along with H\"older's inequality, we obtain
\begin{eqnarray}
    \int_{B_r} \left( \rho(x,h)-\rho(x,u)\right) dx & \leq & \displaystyle \int_{B_r} \left( \left| \rho(x,h) \right| + \left| \rho(x,u) \right| \right) dx \nonumber\\
        & \leq & \displaystyle \int_{B_r} \left[ 2 \left| f(x) \right| + C \right] dx \nonumber\\
        & \leq & \displaystyle C|B_r|^{1/q'}.\label{use later}
\end{eqnarray}
Combining the above estimates yields
$$
    \dashint_{B_r} \left| \nabla(u-h) \right|^2 dx\leq \frac{C}{\mu}|B_r|^{-1/q}
$$
and, using the Poincar\'e inequality in balls, we conclude 
$$
    \dashint_{B_r} \left| u-h \right|^2 dx \leq C r^{2-n/q}.
$$
\end{proof}

We next explore this proximity to the $A$-harmonic replacement and the regularity properties of the latter to start  preparing the proof of the H\"older continuity of local minimizers. Recall that, by De Giorgi--Nash--Moser regularity theory, $h$ is locally H\"older continuous, satisfying, for an (optimal) exponent $\alpha_0\in (0,1)$, the estimate
\begin{equation}\label{A-expoent}
    \left\| h \right\|_{C^{0,\alpha_0}(B_s)} \leq C  \left\| h \right\|_{L^2(B_{2s})},
\end{equation}
for $0<s<1/2$. 

\begin{lemma}\label{int_u}
Let $u$ be a local minimizer of \eqref{functional_rho} satisfying \eqref{hypo}. For any 
\begin{equation} \label{agniek}
    0<\alpha< \min \left\{ 1-\frac{n}{2q} , \alpha_0 \right\},
\end{equation}
where $\alpha_0$ is from \eqref{A-expoent}, there exists $\tau_0 \in \left( 0, \frac14 \right)$ and a constant $b$ such that
$$
    \dashint_{B_{\tau_0}} \left| u(x)-b \right|^2 dx \leq \tau_0^{2\alpha}.
$$
\end{lemma}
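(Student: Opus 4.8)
The plan is to prove, by a Campanato-type iteration, that the mean oscillation $\phi(\rho) := \dashint_{B_\rho}\left|u-\bar u_{B_\rho}\right|^2\,dx$, where $\bar u_{B_\rho}:=\dashint_{B_\rho}u$, decays like $\rho^{2\alpha'}$ on all small balls for some exponent $\alpha'$ with $\alpha<\alpha'<\min\{1-\tfrac{n}{2q},\alpha_0\}$, and then to read off the assertion at a single conveniently small scale $\tau_0$. Two ingredients feed the iteration: Lemma \ref{Approx_lemma}, which controls the closeness of $u$ to its $A$-harmonic replacement at every scale below $\tfrac12$; and the De Giorgi--Nash--Moser estimate \eqref{A-expoent}, used in its scale-invariant Campanato form. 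Since $A$-harmonicity is preserved under subtraction of constants, \eqref{A-expoent} implies that an $A$-harmonic function $h$ in $B_\rho$ satisfies $\dashint_{B_{\theta\rho}}\left|h-\bar h_{B_{\theta\rho}}\right|^2 \le C\,\theta^{2\alpha_0}\dashint_{B_\rho}\left|h-\bar h_{B_\rho}\right|^2$ for $\theta\in(0,\tfrac14)$, with $C=C(n,\mu)$.

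Letting $h$ be the $A$-harmonic replacement of $u$ in $B_\rho$, the first step is the geometric iteration inequality
\begin{equation}\label{eq:int_u_plan}
  \phi(\theta\rho) \le C_\ast\,\theta^{2\alpha_0}\,\phi(\rho) + C_\ast\,\theta^{-n}\,\rho^{2-n/q}, \qquad 0<\rho<\tfrac12,\quad 0<\theta<\tfrac14,
\end{equation}
with $C_\ast=C_\ast(n,\mu,q)$. To obtain it, bound $\phi(\theta\rho)\le \dashint_{B_{\theta\rho}}\left|u-\bar h_{B_{\theta\rho}}\right|^2 \le 2\dashint_{B_{\theta\rho}}|u-h|^2+2\dashint_{B_{\theta\rho}}\left|h-\bar h_{B_{\theta\rho}}\right|^2$ (using that $\bar u_{B_{\theta\rho}}$ is the optimal constant, then the triangle inequality with $\bar h_{B_{\theta\rho}}$); estimate the first term by $\theta^{-n}\dashint_{B_\rho}|u-h|^2\le C\theta^{-n}\rho^{2-n/q}$ via Lemma \ref{Approx_lemma}; estimate the second by the Campanato inequality and then $\dashint_{B_\rho}\left|h-\bar h_{B_\rho}\right|^2\le 2\dashint_{B_\rho}|u-h|^2+2\phi(\rho)\le C\rho^{2-n/q}+2\phi(\rho)$ (again Lemma \ref{Approx_lemma}, plus optimality of $\bar u_{B_\rho}$); finally absorb the resulting $\theta^{2\alpha_0}\rho^{2-n/q}$ into the $\theta^{-n}\rho^{2-n/q}$ term since $\theta^{2\alpha_0}<1\le\theta^{-n}$. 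The amplification factor $\theta^{-n}$ only records the passage from $B_\rho$ to $B_{\theta\rho}$; once $\theta$ is frozen it is harmless.

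Now fix $\alpha'\in\big(\alpha,\min\{1-\tfrac{n}{2q},\alpha_0\}\big)$, possible by \eqref{agniek}. Since $\alpha'<\alpha_0$, choose $\theta=\theta(n,\mu,\alpha')\in(0,\tfrac14)$ so small that $C_\ast\theta^{2\alpha_0}\le\theta^{2\alpha'}$; then \eqref{eq:int_u_plan} becomes $\phi(\theta\rho)\le\theta^{2\alpha'}\phi(\rho)+C_\theta\,\rho^{2-n/q}$ with $C_\theta:=C_\ast\theta^{-n}$. Because $2\alpha'<2-\tfrac{n}{q}$, again by \eqref{agniek}, a standard iteration of this inequality down the scales $\rho_k=\theta^k/4$, together with $\phi(1/4)\le\dashint_{B_{1/4}}u^2\le 4^n$ from \eqref{hypo} and monotonicity of $\phi$ to fill the dyadic gaps, yields $\phi(\rho)\le K\rho^{2\alpha'}$ for all $\rho\in(0,\tfrac14]$, with $K=K(n,\mu,q,\alpha')$. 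Since $\alpha'>\alpha$, setting $\tau_0:=\min\{1/8,\;K^{-1/(2(\alpha'-\alpha))}\}\in(0,\tfrac14)$ and $b:=\bar u_{B_{\tau_0}}$ gives $\dashint_{B_{\tau_0}}|u-b|^2=\phi(\tau_0)\le K\tau_0^{2(\alpha'-\alpha)}\tau_0^{2\alpha}\le\tau_0^{2\alpha}$, as desired. The point demanding care here is not a single estimate but the overall architecture: a one-shot comparison to an $A$-harmonic replacement produces only Hölder decay with an exponent strictly below $\min\{1-\tfrac{n}{2q},\alpha_0\}$, so the genuine iteration \eqref{eq:int_u_plan} — and the slack between $\alpha$ and $\alpha'$ that converts the Campanato-type decay into a clean single-scale bound — is indispensable. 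This is exactly the extra layer absent in the constant-coefficient setting of \cite{ASTU1}, where local minimizers are $C^{1,\alpha}$ by standard elliptic theory and the corresponding step is considerably softer.
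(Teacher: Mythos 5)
Your proof is correct, but it follows a genuinely different route from the paper's. The paper treats this lemma as a \emph{one-shot} comparison: it applies Lemma \ref{Approx_lemma} once at the single scale $r=2\tau_0$, invokes the pointwise H\"older estimate $|h(x)-h(0)|\le C_0|x|^{\alpha_0}$ for the replacement $h$ in $B_{2\tau_0}$, takes $b=h(0)$, and chooses $\tau_0$ so small that both error terms $C'\tau_0^{2-n/q}$ and $C_0^2\tau_0^{2\alpha_0}$ fall below $\tau_0^{2\alpha}/2$ --- which is exactly where the strict inequalities in \eqref{agniek} enter. The genuine iteration is then deferred to Lemma \ref{HR0}, which rescales the minimizer and re-applies this lemma. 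You instead run the full Campanato iteration \eqref{eq:int_u_plan} inside the lemma, proving the stronger statement $\phi(\rho)\le K\rho^{2\alpha'}$ on all scales and only then reading off the claim at one scale. This does more work than the statement requires (you essentially re-prove Lemma \ref{HR0} here, so the paper's subsequent rescaling argument would become redundant in your architecture), but it buys something real: it uses \eqref{A-expoent} only in its scale-invariant Campanato form, where the H\"older seminorm of the replacement at scale $\rho$ is measured against the $L^2$-excess of $h$ over $B_\rho$, which is itself decaying. In the paper's one-shot version one must check that the constant $C_0$ is uniform in $\tau_0$, even though the natural interior estimate for the replacement in $B_{2\tau_0}$ carries a factor $\tau_0^{-\alpha_0}$ times the oscillation of $h$; your iteration sidesteps this uniformity issue entirely. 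Your closing claim that the iteration is ``indispensable'' overstates the case --- the lemma only asks for a single-scale bound with an exponent strictly below $\min\{1-\tfrac{n}{2q},\alpha_0\}$, which is precisely what a (carefully normalized) one-shot comparison delivers --- but your argument is complete and all the individual steps (optimality of the mean, the $\theta^{-n}$ enlargement, the absorption of $\theta^{2\alpha_0}\rho^{2-n/q}$, the choice of $\theta$ with $C_*\theta^{2\alpha_0}\le\theta^{2\alpha'}$, and the dyadic filling) are sound.
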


\begin{proof}
With $\tau_0 \in \left( 0, \frac14 \right)$ to be fixed, apply Lemma \ref{Approx_lemma} with $r=2 \tau_0 \in \left( 0, \frac12 \right)$, to get
$$
    \dashint_{B_{{\tau_0}}} \left| u(x)-h \right|^2 dx \leq C 2^{2+n/q^{\prime}} \tau_0^{2-n/q} = C^{\prime} \tau_0^{2-n/q},
$$
where $h$ is the $A$-harmonic replacement of $u$ in $B_{2 \tau_0}$. By local H\"older regularity, there exists $C_0>0$ such that
$$
    |h(x)-h(0)|\leq C_0|x|^{\alpha_0},
$$
for any $x\in B_{\tau_0}$. Then,
\begin{eqnarray*}
\dashint_{B_{\tau_0}} \left| u(x)-h(0) \right|^2 dx &\leq & 2\dashint_{B_{\tau_0}} \left| u(x)-h(x) \right|^2 dx\\ 
& & + 2\dashint_{B_{\tau_0}} \left| h(x)-h(0) \right|^2 dx\\
&\leq & 2C^{\prime} \tau_0^{2-n/q} +2C_0^2 \tau_0^{2\alpha_0}.
\end{eqnarray*}
We conclude the proof by choosing $\tau_0$ so small that both
$$
    2C^{\prime} \tau_0^{2-n/q} \leq \frac{\tau_0^{2\alpha}}{2} \qquad \mathrm{and} \qquad  2C_0^2 \tau_0^{2\alpha_0} \leq \frac{\tau_0^{2\alpha}}{2},
$$
which is possible due to \eqref{agniek}. 
\end{proof}

\begin{remark}
We remark that $b=h(0)$ is uniformly bounded in view of the maximum principle and Proposition \ref{blocal}. This means that there exists a constant $L>0$, depending only on the data, such that $|b| \leq L$.
\end{remark}

Next, we iterate the previous result to obtain the H\"older continuity of a local minimizer in a ball.

\begin{lemma}\label{HR0}
For any $\alpha$ satisfying \eqref{agniek}, there exist $\tau_0 \in \left( 0, \frac14 \right)$ and a constant $C>0$,  depending only on universal quantities and $\alpha$, such that if $u$ is a local minimizer of \eqref{functional_rho} satisfying \eqref{hypo},
then
\begin{equation}\label{LH_reg}
    \left| u(x)-u(0) \right| \leq C|x|^{\alpha}, 
\end{equation}
for every $x \in B_{\tau_0}$.
\end{lemma}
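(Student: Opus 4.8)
The plan is to iterate Lemma \ref{int_u} at dyadically shrinking scales, using the scaling structure recorded in Lemma \ref{scaling} to re-normalize at each step so that the hypothesis \eqref{hypo} is reinstated. Concretely, fix $\alpha$ as in \eqref{agniek}, let $\tau_0$ and $b_0=h(0)$ be the radius and constant furnished by Lemma \ref{int_u} applied to $u$, and define $u_1(x) := \tau_0^{-\alpha}\bigl(u(\tau_0 x)-b_0\bigr)$. The conclusion of Lemma \ref{int_u} says exactly that $\dashint_{B_1} u_1^2\,dx \le 1$, so $u_1$ satisfies \eqref{hypo}. The key point is that $u_1$ is again a local minimizer of a functional of the same type: by Lemma \ref{scaling} (with $\varrho=\tau_0$, $\mu=\tau_0^\alpha$, plus a trivial vertical translation, which does not affect the Euler--Lagrange/minimization structure since $\rho$ need not be differentiable), the rescaled function minimizes $\mathscr{F}_\rho$ with a new measurable elliptic matrix $A_1(x)=A(\tau_0 x)$ satisfying the same ellipticity constants, a new forcing term still bounded in $L^q$ with the same norm control (here one checks $\tau_0^{\gamma(0)}(\tau_0/\tau_0^\alpha)^2 \le 1$ for $\tau_0$ small since $\gamma\ge 0$ and $2-2\alpha>0$), and the same exponent bounds. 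Hence Lemma \ref{int_u} applies to $u_1$, producing a new constant $b_1$ with $\dashint_{B_{\tau_0}}|u_1-b_1|^2 \le \tau_0^{2\alpha}$.

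Iterating, I would construct sequences of normalized minimizers $u_k$ and constants $b_k$ with $|b_k|\le \tau_0^\alpha$ (so that $b_k$ scales correctly back) and
\begin{equation*}
\dashint_{B_{\tau_0^{k}}} \Bigl| u(x) - \sum_{j=0}^{k-1} \tau_0^{j\alpha} b_j \Bigr|^2 dx \le \tau_0^{2k\alpha}.
\end{equation*}
Setting $a_k := \sum_{j=0}^{k-1}\tau_0^{j\alpha}b_j$, the bound $|b_j|\le \tau_0^\alpha$ (or simply $|b_j|\le L$ from the Remark, combined with the geometric weight $\tau_0^{j\alpha}$) gives that $(a_k)$ is Cauchy, converging to some $a_\infty$, and that $|a_k - a_\infty| \le C\tau_0^{k\alpha}$. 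Combining this with the displayed $L^2$ decay and the triangle inequality yields $\dashint_{B_{\tau_0^k}} |u - a_\infty|^2 \le C\tau_0^{2k\alpha}$ for all $k$. A standard interpolation across dyadic annuli (for arbitrary $r\in(0,\tau_0)$, pick $k$ with $\tau_0^{k+1}\le r < \tau_0^k$ and compare volumes) upgrades this to $\dashint_{B_r}|u-a_\infty|^2 \le C r^{2\alpha}$ for all small $r$, which in particular forces $a_\infty$ to be the Lebesgue value $u(0)$; then Campanato's characterization (or a direct mean-oscillation argument) gives the pointwise Hölder bound \eqref{LH_reg} on $B_{\tau_0}$.

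The main obstacle I anticipate is bookkeeping the \emph{stability of the class of functionals under rescaling} — specifically verifying that the renormalized forcing coefficient $\lambda_{0,\tau_0^k,\tau_0^{k\alpha}}$ stays uniformly bounded (equivalently, that the constant in Lemma \ref{int_u} does not deteriorate along the iteration). This is where the constraint $\alpha < 1-\tfrac{n}{2q}$ and the non-negativity $\gamma\ge 0$ are used: the scaling factor $\mu^{\gamma(0+\varrho x)}(\varrho/\mu)^2$ with $\varrho=\tau_0^k$, $\mu=\tau_0^{k\alpha}$ equals $\tau_0^{k\gamma(\cdots)}\tau_0^{2k(1-\alpha)}$, which is $\le 1$ once $\tau_0\le 1$ because the exponent $k\gamma(\cdots)+2k(1-\alpha)>0$; one should be slightly careful that $f$ in condition \eqref{bounded_pho} also rescales acceptably, but since the constant $C$ on the right of \eqref{bounded_pho} only gets multiplied by a factor $\le 1$ and $\|f\|_q$ is likewise non-increasing under this rescaling, the hypotheses of Lemma \ref{int_u} hold uniformly. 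A secondary, purely routine point is to track that all constants ($C_0$, the Sobolev constant, the $\tau_0$ from Lemma \ref{int_u}) can be chosen once and for all, independent of $k$, which is immediate since the data bounds are scale-invariant.
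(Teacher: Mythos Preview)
Your proposal is correct and follows essentially the same route as the paper: iterate Lemma~\ref{int_u} by rescaling $w_k(x)=\tau_0^{-k\alpha}\bigl(u(\tau_0^k x)-b_k\bigr)$, verify the rescaled functional stays in the class \eqref{bounded_pho} (which is exactly where $\alpha<1-\tfrac{n}{2q}$ enters, as you noted), extract a Cauchy sequence of centers $b_k$ via the geometric series, and conclude by Campanato. The only cosmetic slip is that Lemma~\ref{scaling} is stated for the specific functional \eqref{funct} rather than the general \eqref{functional_rho}, but the direct computation you sketch (matching the paper's $\rho_k(x,v)=\tau_0^{2k(1-\alpha)}\rho(\tau_0^k x,\tau_0^{k\alpha}v+b_k)$) handles this.
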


\begin{proof}
Given $\alpha$ satisfying \eqref{agniek}, we consider $\tau_0 \in \left( 0, \frac14 \right)$ the corresponding radius given by Lemma \ref{int_u}. 

For $u$ a local minimizer of \eqref{functional_rho}, we claim that
\begin{equation}\label{ind_hyp}
    \dashint_{B_{\tau_0^k}} \left| u(x)-b_k \right|^2 dx \leq \tau_0^{2k\alpha},\qquad \text{for}\ k=1,2,\ldots,
\end{equation}
for some convergent sequence $(b_k)$. We will use induction over $k$ to prove the claim, the case $k=1$ being Lemma \ref{int_u}, for some constant $b_1$ such that $|b_1|\leq L$, with $L$ depending only on the data. Now, suppose that \eqref{ind_hyp} is true for $k>1$ and some constant $b_k$, and let's show that the same is true for $k+1$ and a constant $b_{k+1}$. We define
$$
    w_k(x)=\frac{u \left( \tau_0^k x \right)-b_k}{\tau_0^{k\alpha}}, \qquad x \in B_1,
$$
and observe that $w_k$ is a local minimizer of the functional
$$
    \int_{B_1} \left( \langle \tilde{A}(x)\nabla v , \nabla v  \rangle + \rho_k(x,v) \right) dx,
$$
where $\tilde{A}(x)={A} \left( \tau_0^k x \right)$ and $\rho_{k}(x,v)= \tau_0^{2k(1-\alpha)}\rho \left( \tau_0^k x, \tau_0^{k\alpha} v +b_k \right)$. Note that this functional remains in the same class. 

Now, due to the induction hypothesis \eqref{ind_hyp}, we have
$$
    \dashint_{B_1} w_k^2 \, dx= \dashint_{B_1} \frac{\left| u \left( \tau_0^k x \right) -b_k \right|^2}{\tau_0^{2k\alpha}} \, dx \leq 1.
$$
Thus, $w_k$ is entitled to Lemma \ref{int_u}, and there exists a constant $b$, with $|b|\leq L$ such that
$$
    \dashint_{B_{\tau_0}} \left| w_k(x)-b \right|^2 dx \leq \tau_0^{2\alpha}. 
$$
Translating this back to $u$, we get
$$
    \dashint_{B_{\tau_0^{k+1}}} \left| u(x)-b_k-\tau_0^{k\alpha}b \right|^2 dx\leq \tau_0^{2(k+1)\alpha}.
$$
Thus, choosing $b_{k+1}=b_k+\tau_0^{k\alpha}b$, we obtain \eqref{ind_hyp} for $k+1$ as desired.

To show the sequence $(b_k)$ is convergent, we will show it is a Cauchy sequence. First, note that, by construction, 
$$
    \left| b_{k+1}-b_k \right| \leq L \tau_0^{k\alpha}.
$$
Then, for $b_m$ and $b_k$, with $m>k$, we have
\begin{eqnarray*}
|b_m-b_k| & \leq &  |b_{m}-b_{m-1}|+\ldots+|b_{k+1}-b_k|\\
& \leq & L\sum_{i=k}^{m-1}\tau_0^{i \alpha}\\
& =&  \frac{L \tau_0^{k\alpha}}{1-\tau_0^\alpha} \left[ 1-\tau_0^{(m-k)\alpha} \right]. 
\end{eqnarray*}
Since $0<\tau_0<1/4$, taking $k\rightarrow\infty$, we get
$$
    \left| b_m-b_k \right| \rightarrow 0
$$
and thus $(b_k)$ is a Cauchy sequence, so it converges to some $b_0$, with 
$$
    |b_k-b_0|\leq \frac{L \tau_0^{k\alpha}}{1-\tau_0^\alpha}.
$$

Finally, for $0<\tau<\tau_0$, we choose $k$ such that $\tau_0^{k+1}<\tau\leq\tau_0^k$. By \eqref{ind_hyp}, we get
\begin{eqnarray*}
    \dashint_{B_\tau} \left| u(x) -b_0 \right|^2 dx &\leq & 2 \dashint_{B_\tau} \left| u(x) -b_k \right|^2 dx + 2 \dashint_{B_\tau} \left| b_k-b_0 \right|^2 dx\\
    &\leq & 2 \tau_0^{-n} \dashint_{B_{\tau_0^k}} \left| u(x) -b_k \right|^2 dx + 2 \tau_0^{-n} \dashint_{B_{\tau_0^k}}\left| b_k-b_0 \right|^2 dx \\
    &\leq & 2 \tau_0^{-n} \left(1 + \frac{L^2}{(1-\tau_0^\alpha)^2}\right)\tau_0^{2k\alpha}\\
    &\leq & 2 \tau_0^{-n-2\alpha} \left(1 + \frac{L^2}{(1-\tau_0^\alpha)^2}\right) \tau^{2\alpha}.
\end{eqnarray*}
By Campanato's characterization of H\"older continuity, we obtain \eqref{LH_reg}.
\end{proof}

The H\"older regularity for local minimizers of \eqref{functional_rho} follows readily from the previous lemma, and, in particular, we obtain the first part of Theorem \ref{thm-main}.

\begin{proof}[Proof of Theorem \ref{thm-main}--(1)] 
For $x_0$ an arbitrary point in $\Omega$, define the scaling
$$
    v(x)=\kappa \, u\left( x_0+rx \right),  \qquad x\in B_1,
$$
where
$$
    0<r<\frac{1}{2} \dist{(x_0,\partial\Omega)} \qquad \mathrm{and} \qquad \kappa=\left( \dashint_{B_r(x_0)}u^2 \, dx\right)^{-1/2}.
$$
Note that
$$
    \dashint_{B_1} v^2 \, dx=1
$$
and that $v$ is a local minimizer for the functional
$$
    \int_{B_1} \left( \langle \tilde{A}(x) \nabla v , \nabla v \rangle + \rho_k(x,v) \right) dx,
$$
where $\tilde{A}(x)={A}(x_0+rx)$ and $\rho_{\kappa}(x,v)=\kappa^2 r^2 \rho \left( x_0+rx, \kappa^{-1}v \right)$.
By Lemma \ref{HR0}, there exist $\tau_0\in (0,1/4)$ and a universal constant $C>0$ such that
$$
    \left| v(x)-v(0) \right| \leq C|x|^{\alpha}, \qquad x \in B_{\tau_0}.
$$
Translating this information to $u$, we obtain
$$
    \left| u(x)-u(x_0) \right| \leq C^\prime \|u\|_{L^2(B_r(x_0))} \left| x-x_0 \right|^\alpha, \qquad x \in B_{r \tau_0}(x_0), 
$$
with $C^\prime$ depending only on universal quantities, $\alpha$ and $\dist{(x_0,\partial\Omega)}$. 
Using Proposition \ref{blocal}, the result follows.
\end{proof}

\section{Regularity gain} \label{sct reg gain}

In this section, we prove the central part of Theorem \ref{thm-main}, which provides an accurate pointwise regularity at free boundary points for local minimizers with Dini continuous oscillatory singularities. Thus, hereafter, we will focus on non-negative local minimizers of the original energy functional \eqref{funct}.  For simplicity, we restrict the analysis to the case $\Omega=B_1$. We start with a quantitative flatness improvement result.  

\begin{lemma} \label{florida-mall}
Given any $0<\varrho \le 1$, there exists $ \lambda_\varrho >0$, depending only on $n$, $\mu$ and $\varrho$, such that if $u$ is a local minimizer of \eqref{funct} satisfying $0 \leq u \leq 1$ in $B_1$ and $u(0)=0$, with
$$
    0\le \lambda(x) \leq \lambda_\varrho,
$$
then 
$$
    \sup_{B_{1/2}} u \leq \varrho.
$$
\end{lemma}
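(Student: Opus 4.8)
The plan is to argue by contradiction and compactness. Suppose the statement fails for some fixed $\varrho \in (0,1]$. Then there exists a sequence of local minimizers $u_j$ of the functional $\mathscr{F}$ with coefficient matrices $A_j$ satisfying \eqref{ellipticity_condition}, forcing terms $\lambda_j$ with $0 \le \lambda_j(x) \le 1/j$, and exponents $\gamma_j$ with $0 \le \gamma_j \le \gamma^\star$, such that $0 \le u_j \le 1$ in $B_1$, $u_j(0) = 0$, yet $\sup_{B_{1/2}} u_j > \varrho$. The point of sending $\lambda_j \to 0$ is that in the limit the zeroth-order term disappears, so the limit object should be a nonnegative $A_\infty$-harmonic function vanishing at the origin — and by the strong maximum principle (or Harnack, via De Giorgi--Nash--Moser for the merely measurable matrix $A_\infty$), such a function is identically zero, contradicting $\sup_{B_{1/2}} u_\infty \ge \varrho$.

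To make this rigorous, the first step is to extract compactness. By the first part of Theorem \ref{thm-main} (equivalently Lemma \ref{HR0} together with the scaling Lemma \ref{scaling}), each $u_j$ enjoys a universal $C^{0,\epsilon}(B_{3/4})$ estimate with a bound depending only on $\|u_j\|_{L^2(B_1)} \le |B_1|^{1/2}$ and on $n, \mu, \gamma^\star$ — crucially independent of $\lambda_j$ and of the particular matrix $A_j$. Hence, up to a subsequence, $u_j \to u_\infty$ uniformly on compact subsets of $B_{3/4}$, with $0 \le u_\infty \le 1$, $u_\infty(0) = 0$, and $\sup_{\overline{B_{1/2}}} u_\infty \ge \varrho > 0$. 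Simultaneously, since $\|A_j\|_{L^\infty} \le \mu^{-1}$, up to a further subsequence $A_j \stackrel{*}{\rightharpoonup} A_\infty$ weak-$*$ in $L^\infty$, and $A_\infty$ still satisfies \eqref{ellipticity_condition} with the same $\mu$.

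The second step is to identify the limit equation. The natural route is $\Gamma$-convergence / variational stability: one shows that $u_\infty$ is a local minimizer of $\int_{B_{3/4}} \langle A_\infty(x) \nabla v, \nabla v\rangle\, dx$. The $\liminf$ inequality for the Dirichlet part under weak-$*$ convergence of coefficients and weak $H^1$ convergence is standard (it requires a small argument, e.g. using that $\langle A_j \xi, \xi\rangle$ is convex in $\xi$ and lower semicontinuous, or a div-curl type argument); the zeroth-order terms $\int \lambda_j u_j^{\gamma_j}\chi_{\{u_j>0\}}$ are bounded by $\|\lambda_j\|_\infty |B_1| \le |B_1|/j \to 0$ and thus vanish in the limit, as do the corresponding terms for any competitor $v$ obtained by a fixed compactly supported perturbation. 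Passing to the limit in the minimality inequality $\mathscr{F}(u_j) \le \mathscr{F}(v_j)$ for suitably chosen recovery competitors $v_j = v + (u_j - u_\infty)$ then yields that $u_\infty$ minimizes the pure Dirichlet energy, hence is a weak solution of $\mathrm{div}(A_\infty \nabla u_\infty) = 0$ in $B_{3/4}$. By the De Giorgi--Nash--Moser Harnack inequality, a nonnegative such solution vanishing at an interior point is identically zero on $B_{3/4}$, contradicting $\sup_{B_{1/2}} u_\infty \ge \varrho$. This completes the argument.

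The main obstacle, and the point deserving the most care, is the passage to the limit in the minimality inequality when the matrices $A_j$ vary: one cannot simply use weak lower semicontinuity with a fixed integrand, and one must make sure the competitor energies $\mathscr{F}_j(v_j)$ converge to $\int \langle A_\infty \nabla v, \nabla v\rangle$. One clean way around this is to avoid identifying the precise limit operator altogether: it suffices to know that $u_\infty$ is a nonnegative subsolution of \emph{some} uniformly elliptic equation in divergence form with the ellipticity constant $\mu$, which already forces the Harnack inequality and hence $u_\infty \equiv 0$. Alternatively, and perhaps most economically within the paper's framework, one can invoke the already-established interior estimate for minimizers of $\mathscr{F}$ directly on $u_j$ after the scaling $x \mapsto x/2$ to get an oscillation-decay/Harnack-type statement uniform in $\lambda_j \le \lambda_\varrho$; I would, however, favor the compactness argument above, recording the needed $G$-convergence fact as a standard citation, since it isolates cleanly why the smallness of $\lambda$ is exactly what is needed.
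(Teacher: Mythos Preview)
Your compactness strategy is genuinely different from the paper's, which gives a direct, constructive argument: it compares $u$ to its $A$-harmonic replacement $h$ in $B_{1/2}$, uses minimality to bound $\|u-h\|_{L^2}^2 \le C_0\|\lambda\|_\infty$, invokes the $A$-subharmonicity of $u$ (so $0\le u\le h$), applies Harnack and $L^2$--$L^\infty$ estimates to $h$ to get $\sup_{B_{1/2}} u \le C h(0)\le C'r^{-n/2}\|h\|_{L^2(B_r)}$, and finally controls $\|h\|_{L^2(B_r)}$ via the already-proven $C^{0,\epsilon}$ estimate for $u$ at the origin (where $u(0)=0$) together with the $L^2$-closeness of $u$ and $h$. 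One then chooses first an intermediate radius $r_\varrho$ and afterwards $\lambda_\varrho$ explicitly. This stays entirely within the tools built in Section~\ref{sct Prelim} and yields a quantitative $\lambda_\varrho$.

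Your route can be made to work but contains a real error as written. The claim that the $\liminf$ inequality $\liminf_j \int \langle A_j \nabla u_j, \nabla u_j\rangle \ge \int \langle A_\infty \nabla u_\infty, \nabla u_\infty\rangle$ holds under weak-$*$ convergence $A_j \to A_\infty$ and weak $H^1$ convergence $u_j \to u_\infty$ is \emph{false}; convexity in $\xi$ does not help when the $x$-dependence of the integrand itself varies. The classical homogenization example $A_j(x)=a(jx)$ with $a$ periodic shows that the energies of minimizers converge to $\int \langle A^{\mathrm{hom}}\nabla u_\infty,\nabla u_\infty\rangle$ with $A^{\mathrm{hom}}$ strictly below the arithmetic average $A_\infty$ in the matrix order, so the asserted $\liminf$ fails and $u_\infty$ is \emph{not} a minimizer for $A_\infty$. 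You do eventually flag this and retreat to $G$-convergence, which is indeed the correct notion: by Murat--Tartar compactness a subsequence of $\{A_j\}$ $G$-converges to some uniformly elliptic $A^{*}$ (generally different from the weak-$*$ limit), and then $u_\infty$ solves $\mathrm{div}(A^{*}\nabla u_\infty)=0$, whence Harnack and $u_\infty(0)=0$ force $u_\infty\equiv 0$. So the scheme closes, but only after importing homogenization machinery that the paper's argument avoids entirely. You should delete the weak-$*$/``standard $\liminf$'' paragraph and either go straight to $G$-convergence, or---more economically and in the spirit of the surrounding section---adopt the direct $A$-harmonic replacement argument.
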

\begin{proof} Let $h$ be the $A$-harmonic replacement of $u$ in $B_{1/2}$. Revisiting the proof of Lemma \ref{Approx_lemma}, with a closer look at estimate \eqref{use later}, we reach
\begin{equation}\label{flat eq1}
    \dashint_{B_{1/2}} \left| u-h \right|^2 dx \le C_0 \|\lambda\|_\infty,
\end{equation}
where $C_0$ depends only on dimension and ellipticity. Next we note that, since $u$ is a local minimizer of \eqref{funct}, it satisfies
\begin{equation}\label{flat eq2}
    \text{div} (A(x) \nabla u) \ge 0,
\end{equation}
in the distributional sense, \textit{i.e.}, $u$ is $A$-subharmonic. In particular, $0\le u \le h$ in $B_{1/2}$. Applying the Harnack inequality for $h$, along with standard $L^2$--$L^\infty$ estimates for $h$, we obtain
\begin{equation}\label{flat eq3}
    \sup\limits_{B_{1/2}} u \le \sup\limits_{B_{1/2}} h \le C_1h(0) \le C_1C_2r^{-d/2}\|h\|_{L^2(B_r)},
\end{equation}
where $C_1$ and $C_2$ are universal constants and $0<r<1/2$ is arbitrary. Next, using the already proven $\epsilon$-H\"older continuity of $u$ at $0$, together with the fact that $u(0) = 0$, we estimate
\begin{equation}\label{flat eq4}
    \begin{array}{lll}
        \displaystyle \dashint_{B_{r}} h^2 \, dx & \le & 2 \left ( \displaystyle \dashint_{B_{r}} \left| h-u \right|^2 dx +  \dashint_{B_{r}} u^2 \, dx \right ) \\
        &\le& 2 \left ( \displaystyle (2r)^{-n} \dashint_{B_{1/2}} \left| h-u \right|^2 dx +  C r^{2\epsilon} \right ). 
    \end{array}
\end{equation}
Combining \eqref{flat eq1}, \eqref{flat eq3}, and \eqref{flat eq4} we reach
\begin{eqnarray}
    \left (\sup\limits_{B_{1/2}} u \right )^2 &\le& \displaystyle (C_1C_2)^2 \omega_n \dashint_{B_{r}} h^2 \, dx \nonumber \\
    &\le & 2 (C_1C_2)^2 \omega_n \left( \displaystyle (2r)^{-n} C_0 \|\lambda\|_\infty +  C r^{2\epsilon} \right), \label{flat eq5}
\end{eqnarray}
where $\omega_n$ is the volume of the unit ball in $\R^n$. Now, given $0<\varrho \le 1$, we select $0<r_\varrho \ll 1/2$ satisfying
\begin{equation}\label{flat eq6}
     2 (C_1C_2)^2 \omega_n C r_\varrho^{2\epsilon} = \frac{1}{2} \varrho^2.
\end{equation}
Finally, with $0<r_\varrho \ll 1/2$ chosen, we take $\lambda_\varrho >0$ as to verify
\begin{equation}\label{flat eq7}
     2 (C_1C_2)^2 \omega_n (2r_\varrho)^{-n} C_0\lambda_\varrho  = \frac{1}{2} \varrho^2,
\end{equation}
and the proof of the lemma is complete.
\end{proof}

\medskip

Before proving our main result, we analyze the assumptions on $\lambda$ and $\gamma$ that guarantee the scaling invariance for minimizers of the functional \eqref{funct}. Given parameters $0<r\leq 1$ and $\beta >1$, let
$$
    v(x) := \frac{u(rx)}{r^{\beta}}, \quad x \in B_1.
$$
Recalling Lemma \ref{scaling}, if $u$ is a local minimizer of \eqref{funct} in $B_r$ then $v$ is a local minimizer of the functional
$$
    \int_{B_1} \left( \langle A_r(x)\nabla v , \nabla v \rangle +\lambda_r(x)\, \left( v \chi_{v >0}\right)^{\gamma_r(x)} \right) dx,
$$
with
\begin{equation}\label{scalingA}
    A_r(x):=A(rx), \qquad \gamma_r(x):=\gamma(rx) 
\end{equation}
and 
\begin{equation}\label{scalingB}
    \lambda_r(x):=\lambda(rx)r^{\gamma(rx)\beta-2(\beta-1)}=\lambda(rx)r^{\beta\left( \gamma(rx)-2+\frac{2}{\beta}\right)}.
\end{equation}
Observe that $A_r$ satisfies the same ellipticity condition as $A$ and $\gamma_r$ satisfies the same bounds as $\gamma$. To guarantee that $\lambda_r$ is universally bounded, we assume that
$$
    \beta \leq \frac{2}{2-\gamma_\star(r)} \quad \Longrightarrow \quad \gamma(rx)\geq 2 - \frac{2}{\beta}, \ x\in B_1,
$$
where 
$$
    \gamma_\star(r):= \inf_{x \in B_r} \gamma(x).
$$

\medskip

We now prove the main result in this paper, namely the sharp higher regularity at free boundary points.
	
\begin{proof}[Proof of Theorem \ref{thm-main}--(2)] We do the analysis at the origin $x_0=0$ and divide the proof into three steps. Recall we are assuming $\gamma (0) <2$ and that $\gamma$ is Dini continuous at $0$.

\medskip

\noindent\textbf{Step 1} (Pointwise scaling invariance). Let $M>1$ be such that
$$
    \log_2 M =  \frac{2}{2-\gamma (0)} \displaystyle \sum_{k=1}^\infty \omega \left( 2^{-k} \right),
$$
where $\omega$ is the modulus of continuity of $\gamma$. Since  $\gamma$ is Dini-continuous in $B_1$, the above series converges, and $M$ is well defined. 

For $\beta=\frac{2}{2-\gamma (0)}$, take $r=2^{-k}$ in \eqref{scalingB}, for $k \in \N$, obtaining
$$
    \lambda_k(x):=\lambda_{r}(x)=\lambda \left( 2^{-k}x \right)2^{2k \frac{\gamma(0)-\gamma \left( 2^{-k}x \right)}{2-\gamma (0)}}.
$$
If $\gamma(0) \leq \gamma \left( 2^{-k}x \right)$, we easily get $\lambda_k(x) \leq \lambda \left( 2^{-k}x \right)$. Otherwise, 
\begin{eqnarray*}
\lambda_k(x) & \leq &  \lambda \left( 2^{-k}x \right) 2^{\frac{2}{2-\gamma (0)} k \omega \left( 2^{-k} \right)} \\
& \leq &  \lambda \left( 2^{-k}x \right) 2^{\frac{2}{2-\gamma (0)} \sum_{i=1}^k \omega \left( 2^{-i} \right)} \\
& \leq &  \lambda \left( 2^{-k}x \right) 2^{\frac{2}{2-\gamma (0)} \sum_{k=1}^\infty \omega \left( 2^{-k} \right)} \\
& = &\lambda \left( 2^{-k}x \right)  M,
\end{eqnarray*}
where the second inequality follows from the fact that $\omega(\cdot)$ is increasing because it is a modulus of continuity. Therefore, 
\begin{equation}\label{scalingC}
    \lambda_k(x) \leq (1+M) \, \lambda \left( 2^{-k}x \right).
\end{equation}

\medskip

\noindent \textbf{Step 2} (Pointwise oscillation estimates in dyadic balls). We now assume 
\begin{equation}\label{norm}
    0 \leq u \leq 1 \quad \mbox{in} \  B_1 .
\end{equation}
Take
\begin{equation}\label{Tivoli}
    \varrho := 2^{\frac{2}{\gamma (0)-2}}
\end{equation}
and let $\lambda_\varrho >0$ be the corresponding number given by Lemma \ref{florida-mall}. We also assume
\begin{equation}\label{scalingD}
	\|\lambda \|_\infty \leq \frac{1}{1+M} \, \lambda_\varrho.
\end{equation}

Now, for each $k \in \N$, we claim that 
\begin{equation}\label{dyadic}
    \sup_{B_{2^{-k}}} u \leq  2^{\frac{2k}{\gamma (0)-2}}.
\end{equation}
We proceed by induction, the case $k=1$ being given by Lemma \ref{florida-mall}, with $\varrho$ as in \eqref{Tivoli}. Let's assume \eqref{dyadic} holds for $k$ and show the same is true for $k+1$. Define
$$
    v_k(x) := 2^{\frac{2k}{2-\gamma (0)}} u \left( 2^{-k}x \right), \quad x \in B_1.
$$
Observe that $0 \leq v_k \leq 1$ in $B_1$, by the induction hypothesis, and that $v_k(0)=0$ because $0 \in \partial\{u>0\}$. In addition, $v_k$ is a minimizer of \eqref{funct}, for $A=A_r$, $\gamma=\gamma_r$ and $\lambda=\lambda_r$, as defined in \eqref{scalingA} and \eqref{scalingB}, for $r=2^{-k}$ and $\beta=\frac{2}{2-\gamma (0)}$. Thus, from \eqref{scalingC} and \eqref{scalingD}, we have 
$$
    \lambda_r (x) = \lambda_k (x) \leq \lambda_\varrho,
$$
and so $v_k$ is entitled to Lemma \ref{florida-mall} and we conclude 
$$
    \sup_{B_{1/2}} v_k \leq \varrho = 2^{\frac{2}{\gamma (0)-2}},
$$
which, translated into $u$, gives
$$
    \sup_{B_{2^{-(k+1)}}} u \leq 2^{\frac{2(k+1)}{\gamma (0)-2}}.
$$
The induction is complete.

\medskip

\noindent \textbf{Step 3} (Smallness regime). Since $\gamma(x_0) < 2$, fix $0< r_0 \ll 1$ such that 
$$
    \gamma_0^\star := \sup_{B_{r_0} (x_0)} \gamma <2.
$$
We now need to adjust the parameter $\kappa$ in order to guarantee that
$$
    w(x):= \kappa^{-1} u(x_0+r_0x), \quad x \in B_1,
$$
satisfies conditions \eqref{norm} and \eqref{scalingD}. Choosing 
$$
    \kappa:= \max\left\{\|u\|_{L^\infty},
     \left( \frac{\| \lambda\|_\infty (1+M)}{\lambda_\varrho} \right)^{\frac{1}{2-\gamma_0^\star}} ,  \;1\right\},
$$
we observe that $w$ is a minimizer of \eqref{funct} with coefficient $\lambda$ satisfying \eqref{scalingD} and that $\|w\|_\infty \leq 1$. 

We then apply Step 2 to $w$ and get the desired estimate for $u$ with an additional dependence on $\|u\|_\infty$.  
\end{proof}

\section{Free boundary repelling estimate} \label{sct repel}

In this section, we present the third and concluding piece of information incorporated into the statement of Theorem \ref{thm-main}. The goal is to show that if $\gamma(x_0) > 2$, then $x_0$ must be within a universal distance from the free boundary. 

\begin{proof}[Proof of Theorem \ref{thm-main}--(3)] The proof involves three steps. The key step is to show that $x_0$ cannot be a free boundary point. This will be attained by showing that if $\gamma(x_0) > 2$, then $\sup_{B_r} u$ cannot be strictly positive for all $r>0$. The second step is to verify that if $x_0 \in \{u=0\} \cap \gamma^{-1}(2, 2^*)$, then $B_{\varrho_0}(x_0) \subset \{u=0\}$. This will be done via a compactness argument, reducing the problem to the free boundary, already discussed in Step 1. The third and final step is to show that if $x_0 \in \{u > 0 \} \cap \gamma^{-1}(2, 2^*)$, then $\text{dist}(x_0, \partial \{u>0 \} ) \ge \varrho_0 >0$. This will be attained by simple continuity considerations. Throughout the proof, we will denote by $\omega$ the (fixed) modulus of continuity of $\gamma(x)$. 

\medskip 

\noindent {\bf Step 1} (The free boundary cannot intersect $\gamma^{-1}(2,2^*)$). Let us assume $x_0$ is a free boundary point corresponding to a local minimizer of \eqref{funct} with $\gamma(x_0) > 2$. By continuity, we can assume
$$
    \inf\limits_{B_{r_0}(x)} \gamma \ge 2,
$$
for some $r_0 >0$, depending only on $\gamma(x_0)$ and $\omega$. We then claim that there exists a natural number $j_0 > -\log_2 r_0$, such that
\begin{equation}\label{magic}
    \sup\limits_{B_{2^{-(j_0+1)}}(x_0)} u \le  \delta \sup\limits_{B_{2^{-j_0}}(x_0)} u. 
\end{equation}
for all $0<\delta < 1$.

To prove the claim, suppose its thesis fails to hold. This implies the existence of a positive $\delta_0 >0$ and a sequence $j_k \to \infty$ such that
$$
     \sup\limits_{B_{2^{-(j_k+1)}}(x_0)} u  >  \delta_0 \sup\limits_{B_{2^{-j_k}}(x_0)} u, 
$$
for all $k \in \mathbb{N}$.  Hereafter, for each $0<r<1$, we will denote
$$
    S_r := \sup\limits_{B_r(x_0)} u.
$$
Note that, by our assumption, as $x_0 \in \partial \{u > 0\}$, we have $0< S_r \le 1$, for all $0<r< 1$. Next, we define
$$
    v_k(x) := \frac{u(x_0 + 2^{j_k}x)}{S_{2^{-(j_k+1)}}}.
$$
We easily check that $v_k\colon B_1 \to \mathbb{R}$ satisfies the following four assertions.

\begin{itemize}
    
    \item $0\le v_k(x) \le \delta_0^{-1}$, for all $x\in B_1$;

    \medskip

    \item $v_k(0) = 0$;

    \medskip
    
    \item $\sup\limits_{B_{1/2}} v_k = 1.$

    \medskip
    
    \item $v_k$ is a local minimizer of
    $$
        \mathscr{F} (v) = \int \langle A_k(x) \nabla v, \nabla v \rangle + \lambda_k (x) v^{\gamma_k (x)} \chi_{\{v> 0 \}} dx,
    $$
    where $A_k(x) = A(x_0 + 2^{j_k}x)$, $\gamma_k (x) = \gamma (x_0 + 2^{j_k}x)$ and, more importantly,
    $$
        \lambda_k(x) = \left [ S_{2^{-(j_k+1)}} \right ]^{\gamma_k(x) - 2} 4^{-j_k} \lambda(x).
    $$
\end{itemize}
The key remark here is that, since $\gamma_k(x) - 2 \ge 0$, $\| \lambda_k\|_\infty = \text{o}(1)$, as $k\to \infty$.

\medskip

We now revisit the analysis carried out in the previous section. For each $j_k$, let $h_k$ be the $A_k$-harmonic replacement of $v_k$ in $B_1$. As in the proof of Lemma \ref{florida-mall}, we obtain
\begin{equation}\label{C-Eq2}
    \dashint_{B_1} \left| \nabla (v_k - h_k)\right|^2 dx \le C \| \lambda_k\|_\infty = \text{o}(1),
\end{equation}
as $k\to \infty$. This is because, as commented above, by assumption, $\gamma_k(x) \ge 2$, for all $k$ and, as also previously noted, $ 0< S_{2^{-(j_k+1)}} \le 1$. Applying the maximum principle and Harnack's inequality for $h_k$, we reach
\begin{equation}\label{C-Eq3}
   \sup\limits_{B_{2/3}} v_k \le \sup\limits_{B_{2/3}} h_k \le C_1 h_k(0),
\end{equation}
for a universal constant $C_1>0$. Also, $0\leq h_k \le \delta_0^{-1}$. In view of the $C^{0,\epsilon}$ regularity estimate proven for $v_k$, there exists a function $v_\infty$, such that, up to a subsequence, $v_k \to v_\infty$ locally uniformly in $B_1$ and weakly in $H^1(B_1)$. Also, by known estimates for $h_k$, we can assume $h_k$ converges locally uniformly in $B_1$ and weakly in $H^1(B_1)$ to a function $h_\infty$. Due to \eqref{C-Eq2} and Poincar\'e's inequality, we conclude
$$
    v_\infty \equiv h_\infty,
$$
in $B_1$. In particular, because of uniform convergence, 
$$
    v_k(0) = v_\infty(0) = h_\infty(0) = \lim\limits_{k\to \infty}  h_k(0) = 0.
$$
Thus, letting $k\to \infty$ in \eqref{C-Eq3}, we conclude
$$
    v_k \equiv 0 \quad \text{ in } B_{2/3}.
$$
However, this leads to a contradiction as, per our construction, we have $\sup_{B_{1/2}} v_k = 1$, for all $k\ge1$.

To conclude the proof of Step 1, we simply let $\delta \to 0$ in \eqref{magic}, leading to the conclusion
\begin{equation}\label{C-Eq4}
    \sup\limits_{B_{\varsigma_0}(x_0)} u = 0,
\end{equation}
where $\varsigma_0 = 2^{-(j_0 + 1)}$. This ultimately gives a contradiction as $x_0$ was assumed to be a free boundary point, \textit{i.e.}, $x_0 \in \partial \{u> 0 \}$, whereas equation \eqref{C-Eq4} clearly forces $x_0$ to be in the interior of the contact zero set $\{u = 0\}$. 

\bigskip

\noindent {\bf Step 2} (From the zero set to the free boundary). Assume $x_0 \in \{u=0\}$, and seeking a contradiction, that no such $\varrho_0>0$ exists. This means there exists a sequence of normalized, non-negative local minimizers $\{u_n\}$ of an energy functional
$$
     \mathscr{F}_n(v) = \int \langle A(x) \nabla v, \nabla v \rangle + \lambda_n(x) v^{\gamma_n(x)} \chi_{\{v> 0 \}} dx,
$$
with 
\begin{itemize}
    
    \item $x_0 \in Z(u_n)$;
    
    \medskip
    
    \item $\text{dist}(x_0, \partial \{u_n> 0 \}) <1/n$;
    
    \medskip
    
    \item $\|\lambda_n\|_\infty \le \Lambda$;
    
    \medskip
    
    \item $\gamma_n(x_0) \ge \gamma(x_0) > 2$;
    
    \medskip
    
    \item $\gamma_n$ has the same modulus of continuity, $\omega$, as $\gamma$;
    
    \medskip
    
    \item $0\le \gamma_n \le \gamma^\star < 2^*$.  
    
    \end{itemize}

\medskip
By the Arzel\`a--Ascoli theorem, up to a subsequence, $\gamma_n$ converges locally uniformly to a $\omega$-continuous function $\gamma$. By the estimates already proven, passing to further subsequences, if needed, we can assume $u_n \to u$ locally uniformly and weakly in $H^1$. Also, $\lambda_n \to \lambda$, weakly in $L^2$, for some uniformly bounded $\lambda$. It is easy to verify that $u$ is a local minimizer of 
\begin{equation}\label{C-Eq0}
    \mathscr{F} (v) = \int \langle A(x) \nabla v, \nabla v \rangle + \lambda (x) v^{\gamma (x)} \chi_{\{v> 0 \}} dx,
\end{equation}
and that $x_0 \in \partial \{u> 0 \} \cap B_{1/2}$.

\bigskip

\noindent {\bf Step 3} (Estimate in the positive set). Assume $x_0 \in \{u>0 \} \cap B_{1/2}$ is such that $\gamma(x_0) = 2 + \nu$, for some $0<\nu< 2^* - 2$. Let $z_0 \in \{u> 0 \} \cap B_{1/2}$ be a free boundary point satisfying
$$
    |z_0 - x_0| = \text{dist}(x_0, \partial\{u>0 \}) =:d.
$$
By the conclusion of Step 1, we know $\gamma(z_0) < 2$, and then
$$
     \gamma(x_0) - \gamma(z_0) > \nu.
$$
On the other hand, by continuity, we can estimate
$$
     \gamma(x_0) - \gamma(z_0) \le \omega(|z_0 - x_0|) = \omega(d).
$$
Hence, we finally obtain
$$
    d \ge \omega^{-1}(\nu),
$$
and the proof is complete.
\end{proof}

\bigskip

{\small \noindent{\bf Acknowledgments.} DJA is partially supported by CNPq grant 311138/2019-5 and FAPESQ grant 2019/0014.  GSS is supported by a CNPq PhD scholarship. JMU is partially supported by KAUST and CMUC (funded by the Portuguese Government through FCT/MCTES, DOI 10.54499/UIDB/00324/2020).}

\medskip

\bibliographystyle{amsplain, amsalpha}

\begin{thebibliography}{99} 

 \bibitem{AP} H.W. Alt and D. Phillips, 
 {\it A free boundary problem for semilinear elliptic equations}, 
 J. Reine Angew. Math. 368 (1986), 63--107.

\bibitem{ASTU1} D. Ara\'ujo, A. Sobral, E. Teixeira and J.M. Urbano,
{\it On free boundary problems shaped by oscillatory singularities}, 
submitted.

\bibitem{AT} D. Ara\'ujo and E. Teixeira, 
{\it Geometric approach to nonvariational singular elliptic equations}, 
Arch. Ration. Mech. Anal. 209 (2013), 1019--1054.

\bibitem{CKS} L. A. Caffarelli, L. Karp and H. Shahgholian
{\it Regularity of a free boundary with application to the Pompeiu problem},
Ann. of Math. (2) 151 (2000), 269--292

\bibitem{DG} E. De Giorgi, 
{\it Sulla differenziabilità e l'analiticità delle estremali degli integrali multipli regolari}, 
Mem. Accad. Sci. Torino. Cl. Sci. Fis. Mat. Nat. (3) 3 (1957), 25--43.

\bibitem{DSS} D. De Silva and O. Savin, 
{\it On certain degenerate one-phase free boundary problems}, 
SIAM J. Math. Anal. 53 (2021), 649--680. 

\bibitem{DSS1} D. De Silva and O. Savin,
{\it Compactness estimates for minimizers of the Alt-Phillips functional of negative exponents},
Adv. Nonlinear Stud. 23 (2023), Paper No. 20220055, 19 pp.

\bibitem{DSS2} D. De Silva and O. Savin, 
{\it The Alt-Philips functional for negative powers}, 
Bull. London Math. Soc. 55 (2023), 2749--2777.

\bibitem{DKV} S. Dipierro, A. Karakhanyan and E. Valdinoci,
{\it Classification of global solutions of a free boundary problem in the plane}, 
Interfaces Free Bound. 25 (2023), 455--490.

\bibitem{ES} L. El Hajj and H. Shahgholian,
{\it Radial symmetry for an elliptic PDE with a free boundary},
Proc. Amer. Math. Soc. Ser. B 8 (2021), 311--319.

\bibitem{GG} M. Giaquinta and E. Giusti,
{\it On the regularity of the minima of variational integrals}, 
Acta Math. 148 (1982), 31--46.

\bibitem{LST} J. Lamboley, Y. Sire and E. Teixeira, 
{\it Free boundary problems involving singular weights}, Comm. Partial Differential Equations 45 (2020), 758--775. 

\bibitem{PTeix} D. Pellegrino and E. Teixeira,
{\it Regularity for quasi-minima of the Alt-Caffarelli functional}, 
submitted.

\bibitem{P1} D. Phillips, 
{\it A minimization problem and the regularity of solutions in the presence of a free boundary}, 
Indiana Univ. Math. J. 32 (1983), 1--17.

\bibitem{P2} D. Phillips, 
{\it Hausdorff measure estimates of a free boundary for a minimum problem}, 
Comm. Partial Differential Equations 8 (1983), 1409--1454.

\bibitem{STeix} S. Snelson and E. Teixeira, 
{\it On the Bernoulli problem with unbounded jumps}, 
submitted.

\bibitem{ST} N. Soave and S. Terracini,
{\it The nodal set of solutions to some elliptic problems: singular nonlinearities}, 
J. Math. Pures Appl. (9) 128 (2019), 264--296.

\bibitem{Teix} E. Teixeira,
{\it Nonlinear elliptic equations with mixed singularities},
Potential Anal. 48 (2018), 325--335.

\bibitem{WY} Y. Wu and H. Yu,
{\it On the fully nonlinear Alt-Phillips equation}, 
Int. Math. Res. Not. IMRN (2022), no.11, 8540--8570.

\bibitem{Y} R. Yang, 
{\it Optimal regularity and nondegeneracy of a free boundary problem related to the fractional Laplacian}, 
Arch. Ration. Mech. Anal. 208 (2013), 693--723.

\end{thebibliography}

\end{document}